\documentclass[review,3p,10pt]{elsarticle}
\biboptions{sort&compress}

\usepackage{amsmath,amssymb,amsthm}
\usepackage{bm,color}
\usepackage{stmaryrd}
\usepackage[T1]{fontenc}
\usepackage{float, caption, subcaption}
\usepackage{booktabs}
\usepackage{multirow}
\usepackage{subcaption}
\usepackage[bookmarks=true,
  bookmarksnumbered=true,
  bookmarksopen=true,
  colorlinks,
  linkcolor=black]{hyperref}

\allowdisplaybreaks
\newtheorem{Def}{Definition}[section]
\newtheorem{theorem}{Theorem}[section]
\newtheorem{lemma}[theorem]{Lemma}

\newtheorem{remark}{Remark}[section]

\makeatletter
\@addtoreset{equation}{section}
\@addtoreset{equation}{subsection}
\renewcommand{\theequation}{%
  \thesection
  \ifnum\value{subsection}>0 .\arabic{subsection}\fi
  .\arabic{equation}}
\@addtoreset{remark}{subsection}
\renewcommand{\theremark}{%
  \thesection
  \ifnum\value{subsection}>0 .\arabic{subsection}\fi
  .\arabic{remark}}
\makeatother
\numberwithin{figure}{subsection}
\numberwithin{table}{subsection}

\DeclareMathAlphabet\mathbfcal{OMS}{cmsy}{b}{n}

\newcommand\pd[2]{\frac{\partial {#1}}{\partial {#2}}}

\journal{{\em JCP’s Special Issue in honor of Peter Lax}}
\usepackage{pifont,xcolor}

\begin{document}
\begin{frontmatter}
  \title{E Scheme and Flux-Limiter Scheme,  Revisited}
  \author{Huazhong Tang \corref{cor1}}
  \ead{hztang@math.pku.edu.cn,hztang@pku.edu.cn}
 \cortext[cor1]{Corresponding author.}  
\address{Center for Applied Physics and Technology, HEDPS and LMAM, 
School of Mathematical Sciences, Peking University, Beijing 100871, P.R. China}
\date{\today}
  \begin{abstract}
This paper  revisits
 the {\em E scheme} of Osher \cite{Osher-SINUM1984}
and the {\em flux-limiter scheme} of Sweby for quasi-linear hyperbolic conservation laws \cite{Sweby-SINUM1984}.
 Part of existing results will be re-understood and some new results will be presented. 
For a scalar conservation law,  except for the
conservative monotone schemes, the E scheme  is a type
of numerical methods that satisfy the discrete entropy condition for any convex entropy, but  numerical entropy flux is not unique.
 {Two-point monotone flux is E flux, but conversely it may not necessarily
be correct. Moreover,  }
 multi-point (three or more points) E flux may not necessarily be monotone flux, and multi-point monotone flux may not necessarily be E flux.
  Sweby's  flux-limiter scheme for the quasi-linear conservation laws was built on the E flux-based splitting  $f_{j+1}-f_j=f_{j+1} { -\hat{f}^{\text{\tiny E}}_{j+\frac12}+\hat{f}^{\text{\tiny E}}_{j+\frac12}}-f_j$ and the LW scheme. It may not be second-order accurate in both space and time.

\end{abstract}
\begin{keyword}
   E scheme, flux-limiter scheme, numerical viscosity, entropy condition,
   hyperbolic conservation law.
\end{keyword}

\end{frontmatter}


\section{Introduction}

The conservation laws of gas dynamics, elastodynamics, electrodynamics and other branches of classical physics are typically expressed by hyperbolic partial differential equations (or systems).
In particular,  hyperbolic equation provides the proper mathematical setting for a host of wave phenomena.
 For quasi-linear hyperbolic conservation laws, 
  even if the initial value is smooth, the solution may be
discontinuous at finite time, so that  
the weak solutions in the sense of
distribution should be considered. 
Unfortunately, the weak solutions may not be unique, so that 
the admissible condition such as the entropy condition is require to select
the unique admissible physical solution.

For numerical methods for hyperbolic conservation laws, the E scheme  \cite{Osher-SINUM1984} for a scalar conservation law is a type of numerical methods that satisfy the discrete entropy condition for any convex entropy, except for the conservative monotone schemes,
and it has been observed that {low-order schemes} are usually stable but quite dissipative in nature around the points of discontinuity. On the other hand, 
  {higher-order numerical schemes} are unstable in nature and show oscillations in the vicinity of discontinuity.
  The  {idea of the flux-limiter scheme}  \cite{Sweby-SINUM1984} is to tune the numerical flux of high order and low order scheme using the  {flux limiter} in such a way that the resulting scheme gives a high order accuracy in the smooth region of flow and sticks with low order of accuracy in the vicinity of sock waves and other discontinuities.  
The  flux-limiter technology is still frequently applied in the construction of high-order accurate bound/positivity-preserving schemes \cite{Wu-Tang-JCP2015,Xu-Zhang2017}.

  {The purpose of this paper is to revisit} 
 the {E scheme} of Osher 
and the {flux-limiter scheme} of Sweby for the quasilinear hyperbolic conservation laws. 
  Part of existing results will be re-understood and some new results will be presented. 
The remainder of this paper is organized as follows. 
Section \ref{section02}
introduces the definition of semi-discrete E scheme as well as E flux,
and discusses their properties.
Section \ref{section03} visits the two-level fully discrete E schemes and
their viscous coefficients and entropy inequalities.
 Section \ref{section04} revisits the {flux-limiter scheme} of Sweby
 for quasi-linear conservation law. 
Finally, our concluding remarks are given in
Section \ref{section05}.

\section{Semi-discrete E schemes}\label{section02}

Consider {one-dimensional scalar} {conservation law}
 \begin{align}\label{eq:HCL001}
u_t+f(u)_x=0,\ \ u,x\in{\mathbb R}, \ t>0,
\end{align}
and its semi-discrete { conservative} scheme
 \begin{align}\label{eq:scheme001}
\frac{\rm d}{{\rm d}t} u_j(t)=-\frac1h (\hat{f}_{j+\frac12}-\hat{f}_{j-\frac12}),
\ \ j\in{\mathbb Z}, \  t>0,
\end{align}
where $h$ denotes the step-size in space, $u_j(t)\approx u(x_j,t)$. 

\begin{remark}  
Numerical flux $\hat{f}_{j+\frac12}$ may depend on $2k$-variables, 
  $u_{j-k+1}$, $\cdots$,    $u_{j+k}$, $k\geq 1$, and
 is a { Lipschitz continuous} function of its arguments satisfying the { consistency} condition
 $ \hat{f}(u, u, \cdots,    u)= {f}(u)$.
 \end{remark}

\begin{Def}[E scheme] 
The semi-discrete scheme \eqref{eq:scheme001} is called  {\em E scheme}\cite{Osher-SINUM1984}, if
\begin{align}\label{eq:E flux001}
\mbox{sgn}(u_{j+1}-u_j) \big(\hat{f}_{j+\frac12}-f(u)\big)\leq 0,
\end{align}
for any $u$ between $u_j$ and $u_{j+1}$, $\forall j\in \mathbb Z$. 
 Denote the numerical flux in \eqref{eq:E flux001} as $\hat{f}_{j+\frac12}^{\text{\tiny E}}$.
 \end{Def}

\begin{remark}  \label{rem-2.2}
 The E schemes \eqref{eq:scheme001}--\eqref{eq:E flux001} may depend on { more than three points},
for example, { $\hat{f}^{\text{\tiny E}}(u_j,u_{j+1})  
-\operatorname{sgn}(u_{j+1}-u_j)  (g_{j+2}-g_{j-1})^2$}
satisfies \eqref{eq:E flux001}, for any { Lipschitz continuous} function $g(u)$.
  Some other possible  { 4-} and { 6}-point E fluxes satisfying \eqref{eq:E flux001}: 
 { $\hat{f}^{\text{\tiny E}}(u_j,u_{j+1})  
-(u_{j+1}-u_j)  (u_{j+2}-u_{j-1})^2$};
 { $\hat{f}^{\text{\tiny E}}(u_j,u_{j+1})  
-(u_{j+1}-u_j)  |u_{j+2}-u_{j-1}|$};
 { $\hat{f}^{\text{\tiny E}}(u_j,u_{j+1})  
-(u_{j+1}-u_j)  (f_{j+2}-f_{j-1})^2$};
 { $\hat{f}^{\text{\tiny E}}(u_j,u_{j+1})  
-(u_{j+1}-u_j)  |f_{j+2}-f_{j-1}|$};
 { $\hat{f}^{\text{\tiny E}}(u_j,u_{j+1})  
-(u_{j+1}-u_j)  (\hat{g}(u_{j+2},u_{j+3})-\hat{g}(u_{j-1},u_{j-2}) )^2$};
 { $\hat{f}^{\text{\tiny E}}(u_j,u_{j+1})  
-(u_{j+1}-u_j)  |\hat{g}(u_{j+2},u_{j+3})-\hat{g}(u_{j-1},u_{j-2})|$}.

\end{remark}   

\begin{remark}   The scheme \eqref{eq:scheme001} may be rewritten as
{\small  \begin{align*}
\frac{\rm d}{{\rm d}t} u_j(t)=-\frac1h \frac{\hat{f}^{\text{\tiny E}}_{j+\frac12}-f_j }{u_{j+1}-u_j} (u_{j+1}-u_j)
-\frac1h \frac{f_j-\hat{f}^{\text{\tiny E}}_{j-\frac12}}{u_{j}-u_{j-1}} (u_{j}-u_{j-1})
=C_{j+\frac12} (u_{j+1}-u_j)
-D_{j-\frac12} (u_{j}-u_{j-1}),
\end{align*}}%
and \eqref{eq:E flux001} implies $C_{j+\frac12}\geq 0$, $C_{j-\frac12}\geq 0$ for all $j$,
so that  the E scheme \eqref{eq:scheme001}--\eqref{eq:E flux001} is TVD.
 Here $f_{j}=f(u_{j})$.
\end{remark}   

 The E flux condition \eqref{eq:E flux001} implies that
 \begin{itemize}
\item {if $u_{j+1}\geq u_j$},  then $\hat{f}_{j+\frac12}^{\text{\tiny E}}\leq f(u)$
 for  $\forall u\in[u_j,u_{j+1}]$, or {$\hat{f}_{j+\frac12}^{\text{\tiny E}}\leq \min_{u\in[u_j,u_{j+1}]} f(u)$};
 \item {if $u_{j+1}<u_j$},  then $\hat{f}^{\text{\tiny E}}_{j+\frac12}\geq f(u)$ for $\forall u\in[u_{j+1},u_{j}]$, 
  or {$\hat{f}^{\text{\tiny E}}_{j+\frac12}\geq \max_{u\in[u_{j+1},u_{j}]} f(u)$},
  \end{itemize}
so the {  E flux condition \eqref{eq:E flux001}} essentially requires that $\hat{f}^{\text{\tiny E}}$ does not ``cross over'' the range of the true $f(u)$, especially in nonlinear cases to prevent non-physical oscillations.

 \begin{lemma}
  { Two-point E flux $\hat{f}^{\text{\tiny E}}_{j+\frac12}$ is monotone flux}, i.e.  $\hat{f}^{\text{\tiny E}}(u_j, u_{j+1})=\hat{f}^{\text{\tiny Mon}}(u_j, u_{j+1})$, which is monotonically non-decreasing and  non-increasing
 w.r.t. $u_j$ and $u_{j+1}$, respectively.
 { Conversely, it may not necessarily be correct.}
 \end{lemma}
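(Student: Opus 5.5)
The plan is to compare an arbitrary two-point E flux $\hat f^{\text{\tiny E}}(a,b)$ with the Godunov flux, which is the extremal E flux, and then try to transfer monotonicity from the Godunov flux. Set $a=u_j$, $b=u_{j+1}$. Condition \eqref{eq:E flux001} says $\hat f^{\text{\tiny E}}(a,b)\le \min_{[a,b]}f$ when $a\le b$, and $\hat f^{\text{\tiny E}}(a,b)\ge \max_{[b,a]}f$ when $a>b$. The right-hand sides together form exactly the Godunov flux
\begin{align*}
\hat f^{\text{\tiny G}}(a,b)=\begin{cases}\min_{u\in[a,b]}f(u), & a\le b,\\ \max_{u\in[b,a]}f(u), & a>b.\end{cases}
\end{align*}
So the first step is the equivalence: $\hat f$ is an E flux iff $\operatorname{sgn}(b-a)\bigl(\hat f(a,b)-\hat f^{\text{\tiny G}}(a,b)\bigr)\le 0$.

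The second step is to check directly that $\hat f^{\text{\tiny G}}$ is monotone. For $a\le b$, increasing $a$ shrinks the interval $[a,b]$, so the minimum does not decrease. Increasing $b$ enlarges the interval, so the minimum does not increase. The case $a>b$ is symmetric with the maximum. Continuity across $a=b$ comes from consistency. This shows that the extremal E flux is monotone.

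The third step is to write a general E flux in viscosity form, $\hat f^{\text{\tiny E}}(a,b)=\hat f^{\text{\tiny G}}(a,b)-\tfrac12 Q(a,b)(b-a)$, with $Q\ge 0$ by the first step. Then
\begin{align*}
\partial_a \hat f^{\text{\tiny E}}=\partial_a \hat f^{\text{\tiny G}}+\tfrac12 Q-\tfrac12 (b-a)\,\partial_a Q .
\end{align*}
I would aim to show this is nonnegative, and analogously that $\partial_b\hat f^{\text{\tiny E}}\le 0$. The tools would be the Lipschitz continuity of $\hat f$ and the requirement that the E inequality hold for \emph{all} pairs, in particular pairs near the diagonal and pairs sharing an endpoint.

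The main obstacle is the last term, $-\tfrac12(b-a)\partial_a Q$. Nothing in \eqref{eq:E flux001} controls the variation of $Q$, since the E condition is a one-sided pointwise bound rather than a derivative condition. I would first try to bound it by the positive term $\tfrac12 Q$. However, already for $f\equiv 0$ the choice $Q=2(2+\sin a)$ gives $\partial_a\hat f^{\text{\tiny E}}=(2+\sin a)-(b-a)\cos a$, which is negative for suitable $a$ and $b$. So the argument can only close under an extra structural assumption, such as $Q$ being constant or independent of the variable being differentiated, or $|b-a|$ being small relative to $Q/|\partial Q|$.

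I therefore expect the genuinely provable content to be the following two facts. First, the Godunov flux, and more generally monotone fluxes, lie on the E side of the comparison in the first step. Second, the stated implication fails in the generality written. I would present that counterexample alongside the proof for the extremal case.
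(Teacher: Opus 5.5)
The lemma as printed has its direction reversed. The paper's proof, abstract and conclusion all establish that a two-point \emph{monotone} flux is an E flux, and that the converse (E $\Rightarrow$ monotone) can fail. So your diagnosis that the literal implication ``E $\Rightarrow$ monotone'' is false is correct. Your counterexample is also valid: $f\equiv 0$ with $\hat f(a,b)=-(2+\sin a)(b-a)$.
\begin{itemize}
\item It is consistent.
\item It satisfies $\operatorname{sgn}(b-a)\hat f(a,b)=-(2+\sin a)|b-a|\le 0$.
\item It has $\partial_a\hat f=(2+\sin a)-(b-a)\cos a=-1<0$ at $(a,b)=(0,3)$.
\end{itemize}
It is only locally Lipschitz because of the $(b-a)\cos a$ term, which is harmless on a bounded range of states. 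This is more than the paper provides for the converse. The paper only remarks that comparing $\hat f^{\text{\tiny E}}(u_j,u_{j+1})$ with $\hat f^{\text{\tiny E}}(u_j,u_j)$ and $\hat f^{\text{\tiny E}}(u_{j+1},u_{j+1})$ does not let one conclude monotonicity; it never exhibits a non-monotone E flux.

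The gap is in the half that is true, which is the actual content of the paper's proof. You only assert in your last paragraph that monotone fluxes ``lie on the E side''; nothing in Steps 1--3 proves it. Step 2 shows that the Godunov flux is monotone. That is a fact about one particular E flux and gives neither direction. Steps 1 and 3 aim at the false implication and only serve as scaffolding for the counterexample.

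The missing argument is a telescoping through the intermediate value $\hat f^{\text{\tiny Mon}}(u_j,u)$. For $u_j\le u\le u_{j+1}$,
\begin{align*}
\hat f^{\text{\tiny Mon}}(u_j,u_{j+1})-f(u)=\bigl[\hat f^{\text{\tiny Mon}}(u_j,u_{j+1})-\hat f^{\text{\tiny Mon}}(u_j,u)\bigr]+\bigl[\hat f^{\text{\tiny Mon}}(u_j,u)-\hat f^{\text{\tiny Mon}}(u,u)\bigr]\le 0 .
\end{align*}
The first bracket is $\le 0$ because $\hat f^{\text{\tiny Mon}}$ is non-increasing in its second argument. The second is $\le 0$ by non-decrease in the first argument together with consistency $\hat f^{\text{\tiny Mon}}(u,u)=f(u)$. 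For $u_{j+1}<u_j$ the same splitting makes both brackets $\ge 0$.

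In the language of your Step 1, this says $\hat f^{\text{\tiny Mon}}(a,b)\le\min_{[a,b]}f=\hat f^{\text{\tiny G}}(a,b)$ for $a\le b$, and $\hat f^{\text{\tiny Mon}}(a,b)\ge\max_{[b,a]}f$ for $a>b$. With this inserted, your write-up would cover both halves of the intended lemma, with a sharper converse than the paper's.
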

 \begin{proof}
 If $u_{j+1}\geq u_j$, then for $u \in [u_{j}, u_{j+1}]$, one has
\begin{align*}
 \hat{f}_{j+\frac12}^{\text{\tiny Mon}}{ -f(u)}
 =& \hat{f}^{\text{\tiny Mon}}(u_j, u_{j+1}) { -\hat{f}^{\text{\tiny Mon}}(u, u)}
 = \hat{f}^{\text{\tiny Mon}}(u_j, u_{j+1})    { -\hat{f}^{\text{\tiny Mon}}(u_j, u)+  \hat{f}^{\text{\tiny Mon}}(u_j, u)}               { -\hat{f}^{\text{\tiny Mon}}(u, u)}\leq 0,
 \end{align*}
 so that $\hat{f}_{j+\frac12}^{\text{\tiny Mon}}$  satisfies \eqref{eq:E flux001}.
  Similarly, if  $u_{j+1}< u_j$, then for $u \in [u_{j+1}, u_{j}]$, 
      $\hat{f}_{j+\frac12}^{\text{\tiny Mon}}$ also satisfies \eqref{eq:E flux001}.
 Conversely, { although}
$\operatorname{sgn}(u_{j+1}-u_j)(\hat{f}^{\text{\tiny E}}(u_j, u_{j+1})-f_j)=
\operatorname{sgn}(u_{j+1}-u_j)(\hat{f}^{\text{\tiny E}}(u_j, u_{j+1})-\hat{f}^{\text{\tiny E}}(u_j, u_{j}))\leq 0$, 
{ one cannot conclude that }  $\hat{f}^{\text{\tiny E}}(u_j, u_{j+1})$ is monotonically non-increasing w.r.t. the 2nd argument $u_{j+1}$;
although $\operatorname{sgn}(u_{j+1}-u_j)(\hat{f}^{\text{\tiny E}}(u_j, u_{j+1})-f_{j+1})=
\operatorname{sgn}(u_{j+1}-u_j)(\hat{f}^{\text{\tiny E}}(u_j, u_{j+1})-\hat{f}^{\text{\tiny E}}(u_{j+1}, u_{j+1}))\leq 0$,  one cannot conclude that   $\hat{f}^{\text{\tiny E}}(u_j, u_{j+1})$ is monotonically non-decreasing w.r.t. the 1st argument $u_{j}$.
 \end{proof}

Here are several well-known two-point E fluxes.
\begin{itemize}
  \item  {Godunov flux} \cite{LeVeque1992}
 $$\hat{f}_{j+\frac12}^{\text{\tiny God}}
 =\begin{cases}
 \min_{u\in[u_j,u_{j+1}]}  f(u), & u_{j+1}>u_j,\\
  \max_{u\in[u_{j+1},u_{j}]}  f(u), & u_{j+1}<u_j,\end{cases}
 $$ 
 is the upper { (resp. lower)} boundary of  the E flux   if $u_{j+1}\geq u_j$ { (resp. $u_{j+1}< u_j$)}. 
  \item {generalized Lax-Friedrichs flux}
 $$\hat{f}_{j+\frac12}^{\text{\tiny gLF}}
 =\frac12(f_j+f_{j+1})-\frac{q}{2}(u_{j+1}-u_j),\ \  \max_u\{|f'(u)|\}\leq q.
 $$
 
If $u_{j+1}\geq u_j$, then for $u \in [u_{j}, u_{j+1}]$, one has
\begin{align*}
 \hat{f}_{j+\frac12}^{\text{\tiny gLF}}{ -f(u)}
 =&\frac12\big(f_j{ f(u)}+f_{j+1}{ -f(u)}\big)-\frac{q}{2}(u_{j+1}{ -u+u}-u_j)
 \\
 =& -\frac12 \int_{u}^{u_{j+1}} [q-f'(\xi)]{\,\rm d}\xi
 -\frac12 \int^{u}_{u_{j}} [q+f'(\xi)]{\,\rm d}\xi\leq 0,
 \end{align*}
 so that $\hat{f}_{j+\frac12}^{\text{\tiny gLF}}$  satisfies \eqref{eq:E flux001}.
  Similarly, if  $u_{j+1}\leq u_j$, then for $u \in [u_{j+1}, u_{j}]$, 
      $\hat{f}_{j+\frac12}^{\text{\tiny gLF}}$ also satisfies \eqref{eq:E flux001}.
  \item {Engquist-Osher flux} \cite{E-O1980}
 \begin{align*}
 \hat{f}_{j+\frac12}^{\text{\tiny EO}}
 =& \frac12 (f_j+f_{j+1})-\frac12\int^{u_{j+1}}_{u_{j}} |f'(\xi)| {\,\rm d} \xi.
 %
 \end{align*}
 If $u_{j+1}\geq u_j$, then for $u \in [u_{j}, u_{j+1}]$, one has
  \begin{align*}
 \hat{f}_{j+\frac12}^{\text{\tiny EO}}{ -f(u)}
 =&\frac12\big(f_j{ -f(u)}+f_{j+1}{ -f(u)}\big)
 -\frac12\int^{u_{j+1}}_{{ u}} |f'(\xi)| {\,\rm d} \xi
  -\frac12\int^{{ u}}_{u_{j}} |f'(\xi)| {\,\rm d} \xi
 \\
= & -\frac12 \int_{u}^{u_{j+1}} [|f'(\xi)|-f'(\xi)]{\,\rm d}\xi
 -\frac12 \int^{u}_{u_{j}} [|f'(\xi)|+f'(\xi)]{\,\rm d}\xi\leq 0,
  \end{align*}
   so that $\hat{f}_{j+\frac12}^{\text{\tiny EO}}$  satisfies \eqref{eq:E flux001}.
 Similarly, if $u_{j+1}\leq u_j$, then for $u \in [u_{j+1}, u_{j}]$, 
    $\hat{f}_{j+\frac12}^{\text{\tiny EO}}$ also satisfies \eqref{eq:E flux001}.
 
\end{itemize}

 \begin{lemma}\label{lemma2.2}
Multi-point { (three or more points)} E flux may not necessarily be monotone flux, and multi-point monotone
flux may not necessarily be E flux.
\end{lemma}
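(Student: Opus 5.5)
The statement has two negative claims, so I will give one explicit counterexample for each, using the fluxes already introduced. Throughout, $f(u)=u$ (linear advection) or $f$ Burgers-type. A numerical flux $\hat f(u_{j-k+1},\dots,u_{j+k})$ is monotone when it is non-decreasing in the arguments with index $\le j$ and non-increasing in those with index $\ge j+1$.

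\emph{Part 1: an E flux that is not monotone.} I take one of the multi-point E fluxes from Remark~\ref{rem-2.2}:
$$\hat f_{j+\frac12}=\hat f^{\text{\tiny E}}(u_j,u_{j+1})-(u_{j+1}-u_j)\,(u_{j+2}-u_{j-1})^2,$$
with $\hat f^{\text{\tiny E}}$ the upwind flux $u_j$ for $f(u)=u$. The E property follows from Remark~\ref{rem-2.2}: the correction has the sign of $-(u_{j+1}-u_j)$. To disprove monotonicity, I compute
$$\partial\hat f/\partial u_{j-1}=2(u_{j+1}-u_j)(u_{j+2}-u_{j-1}).$$
This is negative whenever $u_{j+1}>u_j$ and $u_{j+2}>u_{j-1}$, for example at $(u_{j-1},u_j,u_{j+1},u_{j+2})=(0,0,1,2)$. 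So the flux decreases in an argument in which it should be non-decreasing. Alternatively, $\partial\hat f/\partial u_{j+2}=-2(u_{j+1}-u_j)(u_{j+2}-u_{j-1})$ is positive when $u_{j+1}>u_j$ and $u_{j+2}<u_{j-1}$. Since the flux is a smooth polynomial, checking the sign of the partial derivative at one point is enough.

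\emph{Part 2: a monotone flux that is not an E flux.} The natural candidate is a three- or four-point monotone flux built by shifting a two-point E flux. For $f(u)=u$, take the upwind-type flux $\hat f_{j+\frac12}=u_{j-1}$. It is consistent, Lipschitz, and non-decreasing in $u_{j-1}$, an argument with index $\le j$, so it is monotone. However, (2.3) requires $u_{j-1}\le u$ for all $u\in[u_j,u_{j+1}]$ when $u_{j+1}\ge u_j$, which fails for $u_{j-1}=1$, $u_j=0$, $u_{j+1}=1$. A more convincing example is a genuine convex combination such as
$$\hat f_{j+\frac12}=\tfrac12\,\hat f^{\text{\tiny gLF}}(u_{j-1},u_j)+\tfrac12\,\hat f^{\text{\tiny gLF}}(u_j,u_{j+1}).$$
Each piece is monotone in its arguments, and $u_j$ appears with non-decreasing dependence in both pieces, so the combination is a three-point monotone flux. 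Taking $u_{j-1}$ large and $u_j=u_{j+1}$ then gives $\hat f_{j+\frac12}\neq f(u_j)$, while (2.3) with $u_j=u_{j+1}$ forces $\hat f_{j+\frac12}=f(u_j)$. That contradicts the E condition.

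The main obstacle is making sure the Part 2 example is truly monotone, so the sign conventions for each argument must be checked carefully. I would present the simplest verifiable version, $\hat f=\tfrac12\hat f^{\text{\tiny gLF}}(u_{j-1},u_j)+\tfrac12\hat f^{\text{\tiny gLF}}(u_j,u_{j+1})$ for $f(u)=u$ with $q=1$, which equals $\tfrac12(u_{j-1}+u_j)$. Evaluating at $(u_{j-1},u_j,u_{j+1})=(2,0,0)$ gives $\hat f=1\ne 0=f(u_j)$. This violates (2.3) for $u=u_j=u_{j+1}$, because the E condition at a constant pair demands $\hat f_{j+\frac12}=f(u_j)$ (the sign is $0$ only formally, so I will use a tiny perturbation $u_{j+1}=\epsilon>0$ to make the inequality bite).
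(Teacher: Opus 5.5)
Your route is the same as the paper's. For the first claim you use an E flux from Remark~\ref{rem-2.2} with a squared wide-stencil correction. For the second you use a monotone scheme on a doubled stencil: the paper uses Lax--Friedrichs with spacing $2h$, and your final example is upwind with spacing $2h$. The examples you finally settle on do work, but three steps as written are wrong.

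\emph{Sign slip in Part 1.} $\partial\hat f/\partial u_{j-1}=2(u_{j+1}-u_j)(u_{j+2}-u_{j-1})$ equals $+4$ at $(0,0,1,2)$, which monotonicity permits. It is negative only when the two differences have opposite signs. Lead with your alternative instead: at $(u_{j-1},u_j,u_{j+1},u_{j+2})=(2,0,1,0)$ one gets $\partial\hat f/\partial u_{j+2}=4>0$.

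\emph{Meaning of ``monotone'' for multi-point fluxes.} In the paper's proof, as in Harten--Hyman--Lax, a multi-point flux is monotone when the scheme $H$ it generates is non-decreasing in every argument. For $k\ge 2$ your sign conditions on $\hat f$ do not imply this, because $\partial H/\partial u_{j+m}$ involves a difference of two derivatives of $\hat f$ taken on shifted stencils.
\begin{itemize}
\item Your first Part 2 candidate, $\hat f_{j+\frac12}=u_{j-1}$, shows the failure: it gives $H=u_j-\sigma(u_{j-1}-u_{j-2})$ with $\partial H/\partial u_{j-1}=-\sigma<0$. It is not an admissible example and should be dropped.
\item In Part 1 the issue is harmless once you use the extreme argument. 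Since $u_{j+2}$ enters $H$ only through $\hat f_{j+\frac12}$, you get $\partial H/\partial u_{j+2}=-\sigma\,\partial\hat f_{j+\frac12}/\partial u_{j+2}=-4\sigma<0$ at the point above. Say this explicitly.
\item In Part 2 you must check $H$ directly. For $\hat f_{j+\frac12}=\frac12(u_{j-1}+u_j)$ one has $H=u_j-\frac{\sigma}{2}(u_j-u_{j-2})$, which is monotone for $\sigma\le 2$. At $(u_{j-1},u_j,u_{j+1})=(2,0,1)$ the E condition would require $1\le\min_{[0,1]}u=0$, so no $\epsilon$-perturbation is needed.
\end{itemize}

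\emph{The $\hat f^{\text{\tiny gLF}}$ average is not monotone in general.} Your claim that ``$u_j$ appears with non-decreasing dependence in both pieces'' is false. $u_j$ is the \emph{second} argument of $\hat f^{\text{\tiny gLF}}(u_{j-1},u_j)$, so it enters with non-increasing dependence. The average has $\partial\hat f/\partial u_j=\frac12 f'(u_j)$, and its scheme has $\partial H/\partial u_{j-1}=-\frac{\sigma}{4}\big(q-f'(u_{j-1})\big)\le 0$. The construction only works in the degenerate case $f(u)=u$, $q=1$, where it collapses to $\frac12(u_{j-1}+u_j)$. Present that flux directly rather than as an instance of a general construction.
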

\begin{proof}
Let us provide examples to illustrate this conclusion.
For example, consider { $\hat{f}^{\text{\tiny E}}(u_j,u_{j+1})  
-\operatorname{sgn}(u_{j+1}-u_j)  (g_{j+2}-g_{j-1})^2$}
satisfies \eqref{eq:E flux001} for any { Lipschitz continuous} function $g(u)$, given in Remark \ref{rem-2.2}. 
It  is not difficult to verify that corresponding scheme
{ \begin{align*}
u_{j}^{n+1}=&u_{j}^{n}-\sigma_n 
\big[  
 \hat{f}^{\text{\tiny E}}(u_j^n,u_{j+1}^n) 
 -\operatorname{sgn}(u_{j+1}-u_j)  (g_{j+2}-g_{j-1})^2
\\ &
-\hat{f}^{\text{\tiny E}}(u_{j-1}^n,u_{j}^n)  
+\operatorname{sgn}(u_{j}-u_{j-1})  (g_{j+1}-g_{j-2})^2
\big]
\\
=&u_{j}^{n}-\sigma_n 
\big[   \hat{f}^{\text{\tiny E}}(u_j^n,u_{j+1}^n) -\hat{f}^{\text{\tiny E}}(u_{j-1}^n,u_{j}^n)\big]
\\ & 
+\sigma_n \big[ 
\operatorname{sgn}(u_{j+1}-u_j)  (g_{j+2}-g_{j-1})^2
-\operatorname{sgn}(u_{j}-u_{j-1})  (g_{j+1}-g_{j-2})^2
\big],
\end{align*}}%
is not a monotone scheme in general for a generally { Lipschitz continuous} function $g(u)$.

Conversely, consider
{\small \begin{align}\label{eq:monotone}
u_{j}^{n+1}=&u_{j}^{n}-\frac14 \sigma_n 
\left( f^n_{j+2}-f_{j-2}^n \right)
+\frac14 \sigma_n q\left(u_{j+2}^n-2u_j^n+u_{j-2}^n\right)=:H(u_{j-2}^n,u_j^n,u_{j+2}^n)
\\
=&
u_{j}^{n}-\sigma_n 
\left[ \frac14 (f^n_{j+2}{ +f_{j+1}^n+f_j^n+f_{j-1}^n})
-\frac14 (f_{j-2}^n{ +f_{j-1}^n+f_j^n+f_{j+1}^n})
 \right]
+\frac14 \sigma_n q\left(u_{j+2}^n-2u_j^n+u_{j-2}^n\right).
\end{align}
}%
where $q\geq \max_u \{|f'(u)|\}$, $\sigma_n q\leq 2$.
The scheme \eqref{eq:monotone} is monotone, because
{\small \begin{align*}
\pd{H}{u_{j\pm 2}^n}=\frac14 \sigma_n (q\mp f'(u_{j-2}^n))\geq 0,\ \
\pd{H}{u_{j}^n}=1-\frac12\sigma_n q \geq0.
\end{align*}}%
It is obvious that the numerical flux of the scheme \eqref{eq:monotone} 
{\small $$
\hat{f}_{j+\frac12}=
\frac14 (f_{j+2}{ +f_{j+1}+f_j+f_{j-1}})
-\frac q4 (u_{j+2}^n-u_j^n),
$$}%
does not  satisfy the E flux condition \eqref{eq:E flux001}. 
%
\end{proof} 
 


\begin{theorem}\label{theorem001}
For any convex entropy pair $\{\eta(u),q(u)\}$ of HCL \eqref{eq:HCL001},
$\eta''>0$, $\eta' f'=q'$, 
the scheme \eqref{eq:scheme001}-\eqref{eq:E flux001} satisfies 
\begin{equation}\label{w1-E-scheme}
\frac{{\rm d}}{{\rm d}t} \eta( u_j(t)) +\frac{1}{h}(\hat{q}_{j+\frac12}-\hat{q}_{j-\frac12})\leq 0.
\end{equation}
 \end{theorem}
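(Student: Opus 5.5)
The plan is to follow Osher's original argument: build an explicit numerical entropy flux $\hat q_{j+\frac12}$ from the E flux and reduce \eqref{w1-E-scheme} to a pointwise inequality that uses only \eqref{eq:E flux001} and $\eta''>0$. The natural choice is
\begin{equation*}
\hat{q}_{j+\frac12} = q(u_j) + \eta'(u_j)\big(\hat{f}_{j+\frac12}-f(u_j)\big) + \int_{u_j}^{\,\cdot}\!\cdots ,
\end{equation*}
made symmetric so that it is consistent ($\hat q(u,\dots,u)=q(u)$). Concretely, I would take
\begin{equation*}
\hat{q}_{j+\frac12}=\eta'(u_{j+1})\hat f_{j+\frac12} - \int_{u_{j+1}}\!\!\!\!{}^{\,}\,\cdots
\end{equation*}
in the cleaner form $\hat q_{j+\frac12}=\eta'(u_j)\hat f_{j+\frac12} - \psi(u_j)$, where $\psi(u):=\eta'(u)f(u)-q(u)$ is the entropy potential, so that $\psi'(u)=\eta''(u)f(u)$. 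Consistency holds because $\eta'(u)f(u)-\psi(u)=q(u)$. The choice of $u_j$ versus $u_{j+1}$ is arbitrary, which is one way to see the non-uniqueness of the numerical entropy flux mentioned in the abstract.

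Next I multiply \eqref{eq:scheme001} by $\eta'(u_j)$:
\begin{equation*}
h\frac{\rm d}{{\rm d}t}\eta(u_j)+\hat q_{j+\frac12}-\hat q_{j-\frac12}
= -\eta'(u_j)(\hat f_{j+\frac12}-\hat f_{j-\frac12})+\eta'(u_j)\hat f_{j+\frac12}-\psi(u_j)-\eta'(u_{j-1})\hat f_{j-\frac12}+\psi(u_{j-1}).
\end{equation*}
This collapses to
\begin{equation*}
\big(\eta'(u_j)-\eta'(u_{j-1})\big)\hat f_{j-\frac12}-\big(\psi(u_j)-\psi(u_{j-1})\big)
=\int_{u_{j-1}}^{u_j}\eta''(u)\big(\hat f_{j-\frac12}-f(u)\big)\,{\rm d}u .
\end{equation*}
It then remains to show that this integral is $\le 0$. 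By \eqref{eq:E flux001} at the interface $j-\frac12$, the integrand has sign opposite to $\operatorname{sgn}(u_j-u_{j-1})$, because $\eta''>0$. The integral is oriented from $u_{j-1}$ to $u_j$, so the orientation contributes a factor $\operatorname{sgn}(u_j-u_{j-1})$. The product is therefore nonpositive, and dividing by $h$ gives \eqref{w1-E-scheme}.

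The main obstacle is the bookkeeping of the telescoping: one has to choose the anchor point in $\hat q$ so that the only leftover is a single integral over one interval $[u_{j-1},u_j]$, to which \eqref{eq:E flux001} applies, rather than two integrals with mismatched signs. Multi-point fluxes need no extra work, since only the sign condition \eqref{eq:E flux001} and Lipschitz continuity enter. Lipschitz continuity makes $\hat q$ Lipschitz, as required for a numerical entropy flux.
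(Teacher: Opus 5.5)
Your proof is correct and is essentially the paper's argument. You multiply by $\eta'(u_j)$, telescope into a discrete divergence, and control the single leftover term $\int \eta''(\xi)\big(\hat f-f(\xi)\big)\,{\rm d}\xi\le 0$ via \eqref{eq:E flux001}, with $\psi=\eta' f-q$ doing the job of the paper's integration of $(\eta' f)'-q'$. The only difference is that you anchor the entropy flux at the left state, $\hat q_{j+\frac12}=\eta'(u_j)\hat f_{j+\frac12}-\psi(u_j)$, with the leftover on $[u_{j-1},u_j]$, while the paper anchors at the right state, $\hat q_{j+\frac12}=\eta'(u_{j+1})\hat f_{j+\frac12}-\psi(u_{j+1})$, with the leftover on $[u_j,u_{j+1}]$; your choice is exactly the alternative the paper's following remark on non-uniqueness mentions.
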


\begin{proof}
For any convex entropy pair $\{\eta(u),q(u)\}$ of HCL \eqref{eq:HCL001},
$\eta''>0$, $\eta' f'=q'$,  
multiplying  \eqref{eq:scheme001} by $\eta'_j:=\eta'(u_j)$
 gives
 \begin{align}\nonumber
0=&\eta'_j \frac{{\rm d}}{{\rm d}t} u_j(t)+\frac{1}{h}(\eta'_j\hat{f}_{j+\frac12}-\eta'_j\hat{f}_{j-\frac12})
\\  \nonumber
=&\frac{{\rm d}}{{\rm d}t} \eta(u_j)+\frac{1}{h}(\eta'_j\hat{f}_{j+\frac12}
{ - \eta'_{j+1}\hat{f}_{j+\frac12} +}  \underbrace{{ \eta'_{j+1}\hat{f}_{j+\frac12}}-\eta'_j\hat{f}_{j-\frac12}  }_{\text{\tiny divergence form}})
\\ \nonumber
=&\frac{{\rm d}}{{\rm d}t} \eta(u_j)+\frac{1}{h}( \eta'_{j+1}\hat{f}_{j+\frac12}-\eta'_j\hat{f}_{j-\frac12})
+\frac{1}{h}(\eta'_j- \eta'_{j+1})\hat{f}_{j+\frac12}
\\ \label{eq-E-scheme001}
=& \frac{{\rm d}}{{\rm d}t} \eta(u_j)+\frac{1}{h}( \eta'_{j+1}\hat{f}_{j+\frac12}-\eta'_j\hat{f}_{j-\frac12})
-\frac{1}{h}\int^{u_{j+1}}_{u_j}  \eta'' (\xi)  \hat{f}_{j+\frac12} {\rm d}\xi.
\end{align}
Note that the purpose of adding and subtracting terms on the right side of the second equal sign is to match the discrete divergence form. 

On the other hand, because for any convex entropy$\eta$,  
the  { E flux} condition \eqref{eq:E flux001} implies
  \begin{align*}
 \int^{u_{j+1}}_{u_j}  \eta'' (\xi)  \big(\hat{f}_{j+\frac12} -f(\xi)\big) {\,\rm d}\xi \leq 0,
  \end{align*}
  so that
   \begin{align}\nonumber
 \int^{u_{j+1}}_{u_j} & \eta'' (\xi) \hat{f}_{j+\frac12}   {\,\rm d}\xi 
 \leq 
  \int^{u_{j+1}}_{u_j}  \eta'' (\xi) f(\xi) {\,\rm d}\xi 
  =  \int^{u_{j+1}}_{u_j} \big[ (\eta' (\xi) f(\xi))'-\eta'f'\big]  {\,\rm d}\xi 
  \\ \nonumber
   & =  \int^{u_{j+1}}_{u_j} \big[ (\eta' (\xi) f(\xi))'-q'\big]  {\,\rm d}\xi 
  \\
&=  \eta' ({u_{j+1}}) f({u_{j+1}})-\eta' ({u_{j}}) f({u_{j}}) 
-q({u_{j+1}})+q({u_{j}}).  \label{eq-E-scheme002}
  \end{align}
 The right hand side of \eqref{eq-E-scheme002} is in a discrete divergence form.
 
Combining \eqref{eq-E-scheme001} with \eqref{eq-E-scheme002}
gives 
 \begin{align*}
0\geq &\frac{{\rm d}}{{\rm d}t} \eta(u_j)+\frac{1}{h}( \eta'_{j+1}\hat{f}_{j+\frac12}
 -\eta'_j\hat{f}_{j-\frac12})
\\ & -\frac{1}{h} \left(\eta' ({u_{j+1}}) f({u_{j+1}})-\eta' ({u_{j}}) f({u_{j}}) 
-q({u_{j+1}})+q({u_{j}})\right).
  \end{align*}
  This is the entropy inequality to be proven, where the numerical entropy flux
  $$\hat{q}_{j+\frac12}=\eta'_{j+1}\hat{f}_{j+\frac12}-\eta' ({u_{j+1}}) f({u_{j+1}})+q({u_{j+1}}),
  $$
  is consistent with the entropy flux $q(u)$.
\end{proof}

\begin{remark}
From the proof of { Theorem \ref{theorem001}} that the scheme satisfies the entropy inequality, it can be seen that 
{ the numerical entropy flux in the discrete entropy inequality is not unique}.
  The form of those added and subtracted terms in \eqref{eq-E-scheme001} is not unique.
For example, it can also be added or subtracted $\eta'_{j-1}\hat{f}_{j-\frac12}$, 
or $a\eta'_{j+1}\hat{f}_{j+\frac12}+b\eta'_{j-1}\hat{f}_{j-\frac12}$, 
where both $a$ and $b$ are  arbitrarily positive constants, satisfying { $a+b=1$}.
 \begin{align*}
&\eta'_j\hat{f}_{j+\frac12}-\eta'_j\hat{f}_{j-\frac12}
=(a+b) \eta'_j\hat{f}_{j+\frac12} 
-(a+b) \eta'_j\hat{f}_{j-\frac12} 
\\
&=a\eta'_j\hat{f}_{j+\frac12}
{+b\eta'_j\hat{f}_{j+\frac12}}
{-a\eta'_j\hat{f}_{j-\frac12}}
-b\eta'_j\hat{f}_{j-\frac12}
\\ &\qquad \qquad -a\eta'_{j+1}\hat{f}_{j+\frac12}   
{-b\eta'_{j-1}\hat{f}_{j-\frac12}}
{+a\eta'_{j+1}\hat{f}_{j+\frac12} }  
+b\eta'_{j-1}\hat{f}_{j-\frac12}
\\
&={b\eta'_j\hat{f}_{j+\frac12}}
{-b\eta'_{j-1}\hat{f}_{j-\frac12}}
{+a\eta'_{j+1}\hat{f}_{j+\frac12} } 
{-a\eta'_j\hat{f}_{j-\frac12}}
+a (\eta'_j-\eta'_{j+1})\hat{f}_{j+\frac12} 
+b (\eta'_{j-1}-\eta'_j)\hat{f}_{j-\frac12}.
\end{align*}
The four terms in the third line on the right side of the equal sign are addition and subtraction terms.
The first four  terms at the right hand sie  are in a discrete divergence form, 
while the first black term satisfies (the second is done similarly)
{\small \begin{align*}
& (\eta'_j-\eta'_{j+1})\hat{f}_{j+\frac12}
 =-\int_{u_j}^{u_{j+1}} \eta''(\xi) \hat{f}_{j+\frac12}{\,\rm d}\xi
 =-\int_{u_j}^{u_{j+1}} \eta''(\xi) (\hat{f}_{j+\frac12}-f(\xi)) {\,\rm d}\xi
 -\int_{u_j}^{u_{j+1}} \eta''(\xi)f(\xi){\,\rm d}\xi
 \\ &
 \geq  -\int_{u_j}^{u_{j+1}} \eta''(\xi)f(\xi){\,\rm d}\xi
 =-\int_{u_j}^{u_{j+1}}  [(\eta'(\xi)f(\xi))'- \eta'f'] {\,\rm d}\xi
 ={ [\eta'(\xi)f(\xi)- q]^{u_j}_{u_{j+1}}}.
 \end{align*}}%
  Thus, one has the following entropy inequality 
   \begin{align*}
  0=&\eta'_j \frac{{\rm d}}{{\rm d}t} u_j(t)+\frac{1}{h}(\eta'_j\hat{f}_{j+\frac12}-\eta'_j\hat{f}_{j-\frac12})
  \\
=&\eta'_j \frac{{\rm d}}{{\rm d}t} u_j(t)+\frac{1}{h}\Big[
{ b\eta'_j\hat{f}_{j+\frac12}}
{ -b\eta'_{j-1}\hat{f}_{j-\frac12}}
{ +a\eta'_{j+1}\hat{f}_{j+\frac12} } 
{ -a\eta'_j\hat{f}_{j-\frac12}}
\\ &
+a (\eta'_j-\eta'_{j+1})\hat{f}_{j+\frac12} 
+b (\eta'_{j-1}-\eta'_j)\hat{f}_{j-\frac12}\Big]
\\
\geq &
\eta'_j \frac{{\rm d}}{{\rm d}t} u_j(t)+\frac{1}{h}\Big[
{ b\eta'_j\hat{f}_{j+\frac12}}
{ -b\eta'_{j-1}\hat{f}_{j-\frac12}}
{ +a\eta'_{j+1}\hat{f}_{j+\frac12} } 
{ -a\eta'_j\hat{f}_{j-\frac12}}
\\ &
+a [\eta'(\xi)f(\xi)- q]^{u_j}_{u_{j+1}}
+b  [\eta'(\xi)f(\xi)- q]^{u_{j-1}}_{u_{j}}  \Big].
   \end{align*}
\end{remark}

\section{Fully discrete E schemes}\label{section03}

If applying  the { explicit Euler time discretiztion} to the semi-discrete E scheme \eqref{eq:scheme001}-\eqref{eq:E flux001}, 
then one has the fully discrete  (explicit) E scheme
\begin{align}\label{fully-discrete-E-scheme01}
u_j^{n+1}=u_j^n-\sigma_n(\hat{f}^{\text{\tiny E},n}_{j+\frac12}-\hat{f}^{\text{\tiny E},n}_{j-\frac12})=H(u_{j-k}^{n},\cdots,u_{j+k}^{n})
,\ \  \sigma_n=\frac{\tau_n}{h}.
\end{align}
The numerical flux may be rewritten as 
$$
\hat{f}^{\text{\tiny E}}_{j+\frac12}
=\frac12(f_j+f_{j+1})-\frac1{2\sigma_n}  {Q}^{\text{\tiny E}}_{j+\frac12} (u_{j+1}-u_j),
$$
where
$$
\frac1{\sigma_n}{Q}^{\text{\tiny E}}_{j+\frac12}({u_{j+1}-u_j})
=  (f_j+f_{j+1}-2\hat{f}^{\text{\tiny E}}_{j+\frac12}),
$$
or
$$
{Q}^{\text{\tiny E}}_{j+\frac12}=
{\sigma_n}   \frac{f_j+f_{j+1}-2\hat{f}^{\text{\tiny E}}_{j+\frac12} }  {u_{j+1}-u_j}.
$$
 The condition \eqref{eq:E flux001} for the E scheme implies 
 the viscous coefficient is non-negative, i.e.
 { ${Q}^{\text{\tiny E}}_{j+\frac12}\geq 0$}.
 
Furthermore, one can show that it is not less than the viscous coefficient
of Murman \cite{Murman1974}.
\begin{lemma}\label{lemma3.1}
\ ${Q}^{\text{\tiny E}}_{j+\frac12}\geq {Q}^{\text{\rm\tiny UP}}_{j+\frac12}=|a_{j+\frac12}|$.
\end{lemma}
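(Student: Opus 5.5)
The plan is to use the E flux condition \eqref{eq:E flux001} only at the two endpoints $u=u_j$ and $u=u_{j+1}$, and compare the resulting bound on $\hat{f}^{\text{\tiny E}}_{j+\frac12}$ with Murman's upwind flux. I take Murman's coefficient to be $a_{j+\frac12}=\sigma_n\frac{f_{j+1}-f_j}{u_{j+1}-u_j}$ when $u_{j+1}\neq u_j$, so that
\[
{Q}^{\text{\rm\tiny UP}}_{j+\frac12}=|a_{j+\frac12}|=\sigma_n\frac{|f_{j+1}-f_j|}{|u_{j+1}-u_j|}.
\]
This is the viscosity coefficient of the upwind flux $\frac12(f_j+f_{j+1})-\frac{1}{2\sigma_n}|a_{j+\frac12}|(u_{j+1}-u_j)$. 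When $u_{j+1}=u_j$, both sides are fixed by the usual convention (e.g.\ $a_{j+\frac12}=\sigma_n f'(u_j)$), and I set that degenerate case aside.

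First take $u_{j+1}>u_j$. Choosing $u=u_j$ and then $u=u_{j+1}$ in \eqref{eq:E flux001} gives $\hat{f}^{\text{\tiny E}}_{j+\frac12}\le f_j$ and $\hat{f}^{\text{\tiny E}}_{j+\frac12}\le f_{j+1}$, hence $\hat{f}^{\text{\tiny E}}_{j+\frac12}\le\min(f_j,f_{j+1})$. Using $f_j+f_{j+1}-2\min(f_j,f_{j+1})=|f_{j+1}-f_j|$, this yields
\[
f_j+f_{j+1}-2\hat{f}^{\text{\tiny E}}_{j+\frac12}\;\ge\; |f_{j+1}-f_j|.
\]
Multiplying by $\sigma_n$ and dividing by the positive quantity $u_{j+1}-u_j$ gives ${Q}^{\text{\tiny E}}_{j+\frac12}\ge\sigma_n|f_{j+1}-f_j|/|u_{j+1}-u_j|=|a_{j+\frac12}|$.

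Next take $u_{j+1}<u_j$. Now \eqref{eq:E flux001} gives $\hat{f}^{\text{\tiny E}}_{j+\frac12}\ge\max(f_j,f_{j+1})$, so
\[
f_j+f_{j+1}-2\hat{f}^{\text{\tiny E}}_{j+\frac12}\le -|f_{j+1}-f_j|.
\]
Dividing by the negative quantity $u_{j+1}-u_j$ reverses the inequality and again gives ${Q}^{\text{\tiny E}}_{j+\frac12}\ge|a_{j+\frac12}|$.

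The argument is elementary. The only points needing care are the sign bookkeeping when dividing by $u_{j+1}-u_j$, and stating the convention for Murman's $a_{j+\frac12}$ precisely, including the degenerate case $u_{j+1}=u_j$. I would also remark that only the endpoint values of $f$ are used. The stronger bound $\hat{f}^{\text{\tiny E}}_{j+\frac12}\le\min_{[u_j,u_{j+1}]}f$ (respectively $\ge\max$) shows that the E viscosity dominates Murman's and can be strictly larger, namely when the minimum or maximum of $f$ between $u_j$ and $u_{j+1}$ is attained at an interior point, i.e.\ in the presence of a sonic point.
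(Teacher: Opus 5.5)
Your proof is correct and is essentially the paper's argument: both use the E condition \eqref{eq:E flux001} only at the endpoints $u=u_j$ and $u=u_{j+1}$. The paper organizes the cases by the sign of $a_{j+\frac12}$ rather than of $u_{j+1}-u_j$, so that a single endpoint suffices in each case ($f_j$ when $a_{j+\frac12}\ge 0$, $f_{j+1}$ when $a_{j+\frac12}<0$). It also omits the factor $\sigma_n$ on both sides, an inconsistency that your scaling of $a_{j+\frac12}$ handles cleanly.
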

\begin{proof}
If $ {  a_{j+\frac12}=\frac{f_{j+1}-f_j}{u_{j+1}-u_j}}\geq 0$,
then
\begin{align*}
\frac{f_j+f_{j+1}-2\hat{f}^{\text{\tiny E}}_{j+\frac12} }  {u_{j+1}-u_j}
 -\left|\frac{f_{j+1}-f_j}{u_{j+1}-u_j}\right|
=
 \frac{f_j+f_{j+1}-2\hat{f}^{\text{\tiny E}}_{j+\frac12} }  {u_{j+1}-u_j}
 -\frac{f_{j+1}-f_j}{u_{j+1}-u_j}
 =2\frac{f_j-\hat{f}^{\text{\tiny E}}_{j+\frac12} }  {u_{j+1}-u_j}\geq 0;
\end{align*}
If ${ a_{j+\frac12}=\frac{f_{j+1}-f_j}{u_{j+1}-u_j}}< 0$,
then
\begin{align*}
\frac{f_j+f_{j+1}-2\hat{f}^{\text{\tiny E}}_{j+\frac12} }  {u_{j+1}-u_j}
 -\left|\frac{f_{j+1}-f_j}{u_{j+1}-u_j}\right|
=
 \frac{f_j+f_{j+1}-2\hat{f}^{\text{\tiny E}}_{j+\frac12} }  {u_{j+1}-u_j}
 +\frac{f_{j+1}-f_j}{u_{j+1}-u_j}
 =2\frac{f_{j+1}-\hat{f}^{\text{\tiny E}}_{j+\frac12} }  {u_{j+1}-u_j}\geq 0.
\end{align*}
Hence, \ ${Q}^{\text{\tiny E}}_{j+\frac12}\geq {Q}^{\text{\tiny UP}}_{j+\frac12}=|a_{j+\frac12}|$.
\end{proof}
Lemma \ref{lemma3.1} implies that  three-point E scheme \eqref{fully-discrete-E-scheme01}
is at most first-order accurate  in space and time in the sense of local truncation error (LTE).

Moreover,  it holds the following conclusion.

 \begin{lemma}
 Under the CFL type condition 
\begin{align}\label{E-scheme-CFL001} \sigma {Q}^{\text{\tiny E},n}_{j+\frac12}\leq 1,
\end{align}
 the scheme \eqref{fully-discrete-E-scheme01}
 is TVD.
 \end{lemma}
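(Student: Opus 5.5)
The plan is to put the fully discrete scheme \eqref{fully-discrete-E-scheme01} into Harten's incremental form and then apply Harten's lemma. The flux is written with the viscous coefficient $Q^{\text{\tiny E}}_{j+\frac12}$ and $a_{j+\frac12}=(f_{j+1}-f_j)/(u_{j+1}-u_j)$, with $a_{j+\frac12}=f'(u_j)$ when $u_{j+1}=u_j$. This gives
\begin{align*}
\hat{f}^{\text{\tiny E}}_{j+\frac12}-f_j=\tfrac12\big(a_{j+\frac12}-\tfrac{1}{\sigma_n}Q^{\text{\tiny E}}_{j+\frac12}\big)\Delta_{j+\frac12},\qquad
f_j-\hat{f}^{\text{\tiny E}}_{j-\frac12}=\tfrac12\big(a_{j-\frac12}+\tfrac{1}{\sigma_n}Q^{\text{\tiny E}}_{j-\frac12}\big)\Delta_{j-\frac12},
\end{align*}
where $\Delta_{j+\frac12}=u^n_{j+1}-u^n_j$. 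Substituting these yields
\begin{align*}
u_j^{n+1}=u_j^n+C_{j+\frac12}\Delta_{j+\frac12}-D_{j-\frac12}\Delta_{j-\frac12},\quad
C_{j+\frac12}=\tfrac12\big(Q^{\text{\tiny E}}_{j+\frac12}-\sigma_n a_{j+\frac12}\big),\quad
D_{j-\frac12}=\tfrac12\big(Q^{\text{\tiny E}}_{j-\frac12}+\sigma_n a_{j-\frac12}\big).
\end{align*}
This is the fully discrete analogue of the semi-discrete splitting given in the earlier remark. For multi-point E fluxes, $Q^{\text{\tiny E}}_{j+\frac12}$ depends on more than two values, but that does not matter: Harten's lemma only needs the coefficients as numbers at each $j$.

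The first condition of Harten's lemma is $C_{j+\frac12}\ge0$ and $D_{j+\frac12}\ge0$. By Lemma \ref{lemma3.1}, $Q^{\text{\tiny E}}_{j+\frac12}\ge\sigma_n|a_{j+\frac12}|$ once both sides carry the $\sigma_n$ scaling in the definition of $Q^{\text{\tiny E}}$. Hence $Q^{\text{\tiny E}}_{j+\frac12}\mp\sigma_n a_{j+\frac12}\ge0$. Equivalently, and more directly, the E condition \eqref{eq:E flux001} evaluated at $u=u_j$ and at $u=u_{j+1}$ gives $(f_j-\hat f^{\text{\tiny E}}_{j+\frac12})/\Delta_{j+\frac12}\ge0$ and $(f_{j+1}-\hat f^{\text{\tiny E}}_{j+\frac12})/\Delta_{j+\frac12}\ge0$. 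These two quantities are exactly $C_{j+\frac12}/\sigma_n$ and $D_{j+\frac12}/\sigma_n$.

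The second condition is $C_{j+\frac12}+D_{j+\frac12}\le1$. Summing the two expressions gives $C_{j+\frac12}+D_{j+\frac12}=Q^{\text{\tiny E}}_{j+\frac12}$, so the CFL condition \eqref{E-scheme-CFL001} is precisely this requirement.

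To conclude, I will reproduce the standard Harten argument rather than just cite it. Differencing gives
$$\Delta^{n+1}_{j+\frac12}=(1-C_{j+\frac12}-D_{j+\frac12})\Delta_{j+\frac12}+C_{j+\frac32}\Delta_{j+\frac32}+D_{j-\frac12}\Delta_{j-\frac12}.$$
All three coefficients are nonnegative, so I take absolute values, sum over $j$, and shift indices; the coefficients telescope to $1$, giving $TV(u^{n+1})\le TV(u^n)$. I expect the main obstacle to be bookkeeping of the $\sigma_n$ scaling. Lemma \ref{lemma3.1} is written as $Q^{\text{\tiny E}}\ge|a|$, whereas with $\sigma_n$ included in $Q^{\text{\tiny E}}$ the consistent form is $Q^{\text{\tiny E}}\ge\sigma_n|a|$. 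I will avoid this by deriving the nonnegativity of $C$ and $D$ directly from \eqref{eq:E flux001}, as above. Degenerate cases with $\Delta_{j+\frac12}=0$ contribute nothing and will be handled by the convention for $a_{j+\frac12}$.
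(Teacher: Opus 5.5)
Your argument is correct in substance and follows the route the paper intends. The paper states this lemma without a separate proof, but its remark in Section~2 already writes the semi-discrete E scheme in exactly this incremental form. Your coefficients $C_{j+\frac12}=\sigma_n(f_j-\hat f^{\text{\tiny E}}_{j+\frac12})/\Delta_{j+\frac12}\ge0$ and $D_{j+\frac12}=\sigma_n(f_{j+1}-\hat f^{\text{\tiny E}}_{j+\frac12})/\Delta_{j+\frac12}\ge0$, with $C_{j+\frac12}+D_{j+\frac12}=Q^{\text{\tiny E}}_{j+\frac12}$, followed by Harten's differencing argument, are the fully discrete version of that remark. Two points still need fixing.

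First, you resolved the $\sigma_n$ bookkeeping only for Lemma~\ref{lemma3.1}, not for the CFL condition. You use $Q^{\text{\tiny E}}_{j+\frac12}$ with the factor $\sigma_n$ built in, which is what gives $C_{j+\frac12}+D_{j+\frac12}=Q^{\text{\tiny E}}_{j+\frac12}$. In that normalization, hypothesis \eqref{E-scheme-CFL001} read literally only yields $C_{j+\frac12}+D_{j+\frac12}\le 1/\sigma_n$. That is not Harten's condition, and it is not even dimensionally consistent. So your sentence ``the CFL condition is precisely this requirement'' is false as written. You should state that \eqref{E-scheme-CFL001} is understood in the unscaled normalization in which Lemma~\ref{lemma3.1} is proved, i.e.\ as $\sigma_n(f_j+f_{j+1}-2\hat f^{\text{\tiny E}}_{j+\frac12})/\Delta_{j+\frac12}\le1$; this is exactly $C_{j+\frac12}+D_{j+\frac12}\le1$.

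Second, the convention for $a_{j+\frac12}$ does not handle the degenerate case, because $Q^{\text{\tiny E}}_{j+\frac12}$ is still $0/0$ when $u_{j+1}=u_j$. What you actually need is $\hat f^{\text{\tiny E}}_{j+\frac12}=f_j$ whenever $u_{j+1}=u_j$; then you may set $C_{j+\frac12}=D_{j+\frac12}=0$ there. For a two-point flux this is just consistency. For the multi-point fluxes you explicitly allow, it does not follow from \eqref{eq:E flux001} alone, since the sign factor vanishes. You get it from \eqref{eq:E flux001} together with Lipschitz continuity: letting $u_{j+1}\to u_j$ from above and from below gives $\hat f^{\text{\tiny E}}_{j+\frac12}\le f_j$ and $\hat f^{\text{\tiny E}}_{j+\frac12}\ge f_j$ in the limit.
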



\begin{remark}
Lemma 2.1 of \cite{Osher-SINUM1984} states: {\em The $2k+1$-point E scheme is at most first-order accurate.} Its proof is
{\small\em ``Following the proof that {monotone schemes} are at most first order accurate in [11],
we see that this weakened hypothesis also implies the same result for this explicit case. 
The semi-discrete case follows from a simple limiting procedure.''}
In fact, it is not true because of Lemma \ref{lemma2.2}.
\end{remark}

 Does it  hold generally that
\begin{align}\label{EQ:0321}
{{Q}^{\text{\tiny E}}_{j+\frac12}\geq   | f'(u)|}, 
\end{align}
for any $u$ between  $u_j$ and $u_{j+1}$, under the condition \eqref{eq:E flux001}? Generally, it is not  true,  here gives an example and a counterexample.
It is obvious that $Q^{\text{\tiny gLF}}_{j+\frac12}=q$ 
 satisfies \eqref{EQ:0321},
but   $Q^{\text{\tiny EO}}_{j+\frac12}$ may not because
$$
Q^{\text{\tiny EO}}_{j+\frac12}-|f'(u)|
=\frac1{u_{j+1}-u_j} \int^{u_{j+1}}_{u_j} |f'(s)|{\,\rm d}s-|f'(u)|
=\frac1{u_{j+1}-u_j}\int^{u_{j+1}}_{u_j} 
{ (|f'(s)|-|f'(u)|)}{\,\rm d}s,
$$
is not necessarily always non-negative.
%
%
%
%

\begin{lemma}
 Under the CFL type condition \eqref{E-scheme-CFL001} and
 \eqref{EQ:0321},
 the scheme \eqref{fully-discrete-E-scheme01} satisfies
\begin{align}\label{fully-discrete-E-scheme02}
\eta(u^{n+1}_j)
- \eta(u^{n}_{j}) 
+ {\sigma}  (\hat{q}^{\text{\tiny E},n}_{j+\frac12}-
\hat{q}^{\text{\tiny E},n}_{j-\frac12})
\leq 0,
\end{align}
where $\eta''(u)>0$, 
$\hat{q}^{\text{\tiny E}}_{j+\frac12}
=\frac12(q_j+q_{j+1})-\frac1{2\sigma}  {Q}^{\text{\tiny E}}_{j+\frac12} (\eta_{j+1}-\eta_j),
$ is consistent with $q(u)$
satisfying $q'(u)=\eta'(u)f'(u)$.
 \end{lemma}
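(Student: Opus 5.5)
The plan is to follow Tadmor's convex-splitting argument. The scheme is written as an average of two one-sided, two-point ``half-steps''. Jensen's inequality for the convex $\eta$ then reduces the problem to a two-point entropy inequality for each half-step. Using the viscous form of the E flux, the scheme \eqref{fully-discrete-E-scheme01} reads
\begin{align*}
u_j^{n+1}=\tfrac12 w^-_{j+\frac12}+\tfrac12 w^+_{j-\frac12},
\end{align*}
where
\begin{align*}
w^-_{j+\frac12}&:=u_j-\sigma(f_{j+1}-f_j)+Q^{\text{\tiny E}}_{j+\frac12}(u_{j+1}-u_j),\\
w^+_{j-\frac12}&:=u_j-\sigma(f_j-f_{j-1})-Q^{\text{\tiny E}}_{j-\frac12}(u_j-u_{j-1}).
\end{align*}
Convexity gives $\eta(u_j^{n+1})\le\frac12\eta(w^-_{j+\frac12})+\frac12\eta(w^+_{j-\frac12})$. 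It therefore suffices to prove the two cell-interface inequalities
\begin{align*}
\eta(w^-_{j+\frac12})&\le \eta_j-\sigma(q_{j+1}-q_j)+Q^{\text{\tiny E}}_{j+\frac12}(\eta_{j+1}-\eta_j),\\
\eta(w^+_{j-\frac12})&\le \eta_j-\sigma(q_j-q_{j-1})-Q^{\text{\tiny E}}_{j-\frac12}(\eta_j-\eta_{j-1}).
\end{align*}
Averaging these gives exactly $\eta_j^{n+1}-\eta_j^n+\sigma(\hat q_{j+\frac12}-\hat q_{j-\frac12})\le0$ with the stated $\hat q^{\text{\tiny E}}_{j+\frac12}=\frac12(q_j+q_{j+1})-\frac1{2\sigma}Q^{\text{\tiny E}}_{j+\frac12}(\eta_{j+1}-\eta_j)$. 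Consistency of $\hat q$ is immediate, since $\hat q(u,\dots,u)=q(u)$.

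To prove the interface inequality for $w^-$, fix $Q=Q^{\text{\tiny E}}_{j+\frac12}$, which is frozen as a number. Put $u(\theta)=u_j+\theta(u_{j+1}-u_j)$ for $\theta\in[0,1]$, and define
\begin{align*}
w(\theta)&=u_j-\sigma\big(f(u(\theta))-f_j\big)+Q\big(u(\theta)-u_j\big),\\
G(\theta)&=\eta(w(\theta))-\eta_j+\sigma\big(q(u(\theta))-q_j\big)-Q\big(\eta(u(\theta))-\eta_j\big).
\end{align*}
Then $G(0)=0$, and $G(1)\le0$ is the desired inequality. Differentiating and using $q'=\eta'f'$ gives
\begin{align*}
G'(\theta)=(u_{j+1}-u_j)\,\big(Q-\sigma f'(u(\theta))\big)\big(\eta'(w(\theta))-\eta'(u(\theta))\big).
\end{align*}
Here I read \eqref{EQ:0321} in the scaled form $Q^{\text{\tiny E}}\ge\sigma|f'(u)|$, which matches the scaling of $Q^{\text{\tiny E}}$. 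With that reading, the factor $Q-\sigma f'(u(\theta))$ is non-negative. Since $\eta'$ is increasing, the sign of $G'$ equals the sign of $(u_{j+1}-u_j)(w(\theta)-u(\theta))$. Write
\begin{align*}
w(\theta)-u(\theta)=-\int_{u_j}^{u(\theta)}\big(1-Q+\sigma f'(\xi)\big)\,{\rm d}\xi .
\end{align*}
The CFL condition \eqref{E-scheme-CFL001}, $Q\le1$, combined with $\sigma|f'|\le Q$, gives $1-Q+\sigma f'\ge 1-Q-Q$. This is the delicate point: from these bounds one only gets $1-Q+\sigma f'\ge0$ if $\sigma|f'|\le 1-Q$. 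The robust route is instead to show $w(\theta)$ lies between $u_j$ and $u(\theta)$, or on the appropriate side, by rewriting $w-u_j=\int_{u_j}^{u(\theta)}(Q-\sigma f')\,{\rm d}\xi$. Then $0\le Q-\sigma f'\le 2Q$, and $Q\le1$ (possibly tightened to $2Q\le1$ if needed) places $w(\theta)$ within the segment $[u_j,u(\theta)]$. In that case $(u_{j+1}-u_j)(w-u(\theta))\le0$, so $G'\le0$ and $G(1)\le G(0)=0$.

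The $w^+$ inequality is handled symmetrically, with $Q-\sigma f'$ replaced by $Q+\sigma f'\ge0$ and the path running from $u_j$ towards $u_{j-1}$.

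The main obstacle is exactly this last sign check: showing that $w(\theta)$ stays between $u_j$ and $u(\theta)$. This is where both the CFL restriction and \eqref{EQ:0321} are consumed, and I expect the precise constant in the CFL condition to need care. If $Q\le1$ turns out to be insufficient, I would first try a different parametrization of the path, for instance moving only $f$ along $\theta$ while keeping the $Q$-term linear. Failing that, I would note that the half-CFL $Q\le\frac12$ clearly suffices.
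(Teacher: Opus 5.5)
\textbf{Gap: the Jensen split loses a factor of two in the CFL condition.} Your reduction (Jensen's inequality plus two interface inequalities) is correct. Your half-step argument is complete when $Q\le\frac12$. That is a strictly stronger CFL condition than the lemma's, and the obstruction you flagged cannot be removed by another parametrization: the interface inequalities themselves are false for $\frac12<Q\le 1$.

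Take $f(u)=au$ with $a<0$ and the upwind flux $\hat f_{j+\frac12}=f_{j+1}$. This is an E flux with $Q_{j\pm\frac12}=\sigma|a|=:Q$, so \eqref{E-scheme-CFL001} and \eqref{EQ:0321} hold in your scaling. Then $w^-_{j+\frac12}=u_j+2Q(u_{j+1}-u_j)$, while the right-hand side of your first inequality is $(1-2Q)\eta_j+2Q\eta_{j+1}$. For $2Q>1$ this is an extrapolation, and strict convexity reverses the inequality. For instance, $\eta=\frac12u^2$, $u_j=0$, $u_{j+1}=1$, $Q=0.8$ gives $\eta(w^-)=1.28>0.8$. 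So $G(1)\le0$ fails outright, whatever path you choose.

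Yet the lemma holds in this example. Here $w^+_{j-\frac12}=u_j$, the scheme is $u_j^{n+1}=(1-Q)u_j+Qu_{j+1}$, and Jensen gives the claimed inequality for every $Q\le1$. The symmetric split loads the whole update onto one half-step and doubles its effective Courant number.

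\textbf{Missing idea: estimate the full three-point update.} In the full update, the weights $\frac12(Q_{j+\frac12}-\sigma f'(s))$ and $\frac12(Q_{j-\frac12}+\sigma f'(s))$ add up to $\frac12(Q_{j+\frac12}+Q_{j-\frac12})\le1$ wherever they overlap, because the $f'$ terms cancel. Splitting into independent half-steps discards exactly this cancellation. The paper's proof keeps it, as follows.
\begin{enumerate}
\item Multiply the scheme by $\eta'(u_j^{n+1})$.
\item Write both the resulting identity ($II=0$) and the entropy defect $I$ as integrals of these weights, against $\eta'(u_j^{n+1})$ and against $\eta'(s)$ respectively.
\item Show $II-I\ge0$ by splitting the integrals at $u_j^{n+1}$.
\end{enumerate}
The tails from $u_j^{n+1}$ to $u_{j\pm1}$ are nonnegative by \eqref{EQ:0321} and the monotonicity of $\eta'$. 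Any part of a tail lying outside its own cell interval can be absorbed into the overlap term, since each weight lies in $[0,1]$ on its own interval.

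The overlap on $[u_j,u_j^{n+1}]$ contributes
$\bigl(\frac12(Q_{j+\frac12}+Q_{j-\frac12})-1\bigr)\int_{u_j}^{u_j^{n+1}}\bigl(\eta'(s)-\eta'(u_j^{n+1})\bigr)\,\mathrm{d}s$,
a product of two nonpositive factors. The paper's display treats this term as cancelling, but it is in fact this nonnegative remainder. It is exactly where $Q\le1$, rather than $Q\le\frac12$, is used.
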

 
\begin{remark}The condition \eqref{EQ:0321} is a bit too strong. It is required in the following proof.
 In fact, the conservative three-point monotone scheme is the E scheme and satisfies the entropy inequality for any convex entropy pair with no requirement of the condition \eqref{EQ:0321}, see \cite{HHL1976}. 
\end{remark}
\begin{proof}
 If denoting the left hand side of  \eqref{fully-discrete-E-scheme02} by ${ I}$,
then one has to prove { $I\leq 0$.}

Multiplying \eqref{fully-discrete-E-scheme01} by { $\eta'(u_j^{n+1})$}
gets
{ \begin{align*}
0=&\eta'(u_j^{n+1}) (u^{n+1}_j-u^{n}_j)
-\frac{\eta'(u_j^{n+1})}{2} {Q}^{\text{\tiny E},n}_{j+\frac12} (u_{j+1}^n-u_{j}^n)
+\frac{\sigma}{2}\eta'(u_j^{n+1}) (f(u^{n}_{j+1})-f(u^{n}_{j}))\\
&+\frac{\eta'(u_j^{n+1})}{2}   {Q}^{\text{\tiny E},n}_{j-\frac12} (u_{j}^n-u_{j-1}^n)
+\frac{\sigma}{2}\eta'(u_j^{n+1}) (f(u^{n}_j)-f(u^{n}_{j-1}))
\\
& =-\frac{\sigma}2\int_{u^{n}_j}^{u^{n}_{j+1}} \eta'(u_j^{n+1})
({Q}^{\text{\tiny E},n}_{j+\frac12}  - f'(s) )\ {\rm d}s
  -\frac{\sigma}2\int_{u^{n}_j}^{u^{n}_{j-1}} \eta'(u_j^{n+1})
({Q}^{\text{\tiny E},n}_{j-\frac12}  +  f'(s) )\ {\rm d}s
\\ & 
+\int^{u_j^{n+1}}_{u^{n}_j}\eta'(u_j^{n+1})\ {\rm d}s
=:{ II}.
\end{align*}}%
{ Because$(II-I)+I=II=0$,
 one has $I=-(II-I)\leq 0$ if    $II-I\geq 0$.}

Note that  we do not use the following identity here
$$f(u^{n}_{j+1})-f(u^{n}_{j})=a^n_{j+\frac12}(u_{j+1}^n-u_{j}^n)
=\int_{u^{n}_j}^{u^{n}_{j+1}}   a^n_{j+\frac12}  \ {\rm d}s.
$$
Because
{\small\begin{align*}
I=& \eta(u^{n+1}_j) -\eta(u_j^n) +
\frac{\sigma} 2(q_{j+1}^n- q_j^n+q_j^n-q_{j-1}^n)
-\frac1{2}  {Q}^{\text{\tiny E},n}_{j+\frac12} (\eta_{j+1}-\eta_j)
+\frac1{2}  {Q}^{\text{\tiny E},n}_{j-\frac12} (\eta_{j}-\eta_{j-1})
\\
=&-\frac{\sigma} 2\int_{u^{n}_j}^{u^{n}_{j+1}} \eta'(s)
({Q}^{\text{\tiny E},n}_{j+\frac12}  - f'(s))\ {\rm d}s
-\frac{\sigma}  2\int_{u^{n}_j}^{u^{n}_{j-1}} \eta'(s)
({Q}^{\text{\tiny E},n}_{j-\frac12}  +  f'(s))\ {\rm d}s
+\int^{u_j^{n+1}}_{u^{n}_j}\eta'(s)\ {\rm d}s,
\end{align*}}%
one has (using the assumption that { ${Q}^{\text{\tiny E},n}_{j+\frac12} \geq |f'(u)|$ for any $u$ between $u_j$ and $u_{j+1}$})
{ \begin{align*}
II-I=&
\frac{\sigma}2\int_{{ u^{n}_j}}^{u^{n}_{j+1}} ({Q}^{\text{\tiny E},n}_{j+\frac12}  -   f'(s))(\eta'(s)-\eta'(u_j^{n+1}))
\ {\rm d}s
+\frac{\sigma}2\int_{{ u^{n}_j}}^{u^{n}_{j-1}} ({Q}^{\text{\tiny E},n}_{j-\frac12}  +   f'(s))(\eta(s)-\eta'(u_j^{n+1})
\ {\rm d}s
\\&
-\int^{u_j^{n+1}}_{u^{n}_j}(\eta'(s)-\eta'(u_j^{n+1}))\ {\rm d}s
= 
\frac{\sigma}2\Big(\int_{{ u^{n+1}_j}}^{u^{n}_{j+1}}
+\int^{{ u^{n+1}_j}}_{u^{n}_{j}} \Big)
({Q}^{\text{\tiny E},n}_{j+\frac12}   -  f'(s))(\eta'(s)-\eta'(u_j^{n+1}))
\ {\rm d}s
\\
&+\frac{\sigma} 2\Big(\int_{{ u^{n+1}_j}}^{u^{n}_{j-1}}
+\int^{{ u^{n+1}_j}}_{u^{n}_{j}} \Big)
({Q}^{\text{\tiny E},n}_{j-\frac12}   +  f'(s))(\eta(s)-\eta'(u_j^{n+1})
\ {\rm d}s
-\int^{u_j^{n+1}}_{u^{n}_j}(\eta'(s)-\eta'(u_j^{n+1}))\ {\rm d}s
\\= &
\frac{\sigma}2\int_{{ u^{n+1}_j}}^{u^{n}_{j+1}}
({Q}^{\text{\tiny E},n}_{j+\frac12}   -  f'(s))(\eta'(s)-\eta'(u_j^{n+1}))
\ {\rm d}s
+\frac{\sigma}2 \int_{{ u^{n+1}_j}}^{u^{n}_{j-1}}
({Q}^{\text{\tiny E},n}_{j-\frac12}   +   f'(s))(\eta(s)-\eta'(u_j^{n+1})
\ {\rm d}s
\geq 0.
\end{align*}}%
Note that the first term on the right side of the first equal sign is less than 0 (caused by the explicit Euler time
discretization), while the other two terms are greater than 0.
\end{proof}

\begin{remark} If $\eta=\frac12 u^2$, then
the aforementioned entropy inequality is a common energy inequality.
\end{remark} 

%

If applying  the { implicit Euler time discretiztion} to the semi-discrete E scheme \eqref{eq:scheme001}-\eqref{eq:E flux001}, 
then one has the fully discrete  (implicit) E scheme
\begin{align}\label{fully-discrete-E-scheme01b}
u_j^{n+1}=u_j^n-\sigma(\hat{f}^{\text{\tiny E},n+1}_{j+\frac12}-\hat{f}^{\text{\tiny E},n+1}_{j-\frac12}),\ \  \sigma=\frac{\tau_n}{h}.
\end{align}
The numerical flux may be rewritten as 
$$
\hat{f}^{\text{\tiny E}}_{j+\frac12}
=\frac12(f_j+f_{j+1})-\frac1{2\sigma}  {Q}^{\text{\tiny E}}_{j+\frac12} (u_{j+1}-u_j),
$$
where
$$
\frac1{\sigma}{Q}^{\text{\tiny E}}_{j+\frac12}({u_{j+1}-u_j})
=  (f_j+f_{j+1}-2\hat{f}^{\text{\tiny E}}_{j+\frac12}),
$$
or
$$
{Q}^{\text{\tiny E}}_{j+\frac12}=
{\sigma}   \frac{f_j+f_{j+1}-2\hat{f}^{\text{\tiny E}}_{j+\frac12} }  {u_{j+1}-u_j}.
$$
 The condition \eqref{eq:E flux001} for the E scheme implies 
 the viscous coefficient is non-negative, i.e.
 ${Q}^{\text{\tiny E}}_{j+\frac12}\geq 0$.

\begin{lemma}
 The scheme \eqref{fully-discrete-E-scheme01b} satisfies
\begin{align}\label{fully-discrete-E-scheme02b}
\eta(u^{n+1}_j)
- \eta(u^{n}_{j}) 
+ {\sigma}  (\hat{q}^{\text{\tiny E},n+1}_{j+\frac12}-
\hat{q}^{\text{\tiny E},n+1}_{j-\frac12})
\leq 0,
\end{align}
where $\eta''(u)>0$, 
$\hat{q}^{\text{\tiny E}}_{j+\frac12}$ is consistent with $q(u)$
satisfying $q'(u)=\eta'(u)f'(u)$.
 \end{lemma}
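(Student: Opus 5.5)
Since the fluxes in \eqref{fully-discrete-E-scheme01b} are evaluated at $t^{n+1}$, the plan is to combine the semi-discrete argument of Theorem \ref{theorem001} with the convexity of $\eta$ in time. No CFL condition and no condition \eqref{EQ:0321} should be needed. The numerical entropy flux will be the same one as in the proof of Theorem \ref{theorem001}, only evaluated at level $n+1$:
\begin{align*}
\hat{q}^{\text{\tiny E},n+1}_{j+\frac12}=\eta'(u^{n+1}_{j+1})\hat{f}^{\text{\tiny E},n+1}_{j+\frac12}-\eta'(u^{n+1}_{j+1})f(u^{n+1}_{j+1})+q(u^{n+1}_{j+1}).
\end{align*}
By consistency $\hat f(u,\dots,u)=f(u)$, this reduces to $q(u)$ on constant states, so it is consistent with $q$.

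First, multiply \eqref{fully-discrete-E-scheme01b} by $\eta'_j:=\eta'(u^{n+1}_j)$. The time-difference term is handled by convexity:
\begin{align*}
\eta'(u^{n+1}_j)(u^{n+1}_j-u^n_j)\ge \eta(u^{n+1}_j)-\eta(u^n_j),
\end{align*}
which is the tangent-line inequality for convex $\eta$ taken at the new time level. This is where the implicit discretization helps: the explicit case produced a term of the wrong sign that had to be absorbed using \eqref{EQ:0321}, but here the sign is automatically favourable. This gives
\begin{align*}
\eta(u^{n+1}_j)-\eta(u^n_j)+\sigma\big(\eta'_j\hat f^{\text{\tiny E},n+1}_{j+\frac12}-\eta'_j\hat f^{\text{\tiny E},n+1}_{j-\frac12}\big)\le 0 .
\end{align*}

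Second, treat the spatial term exactly as in \eqref{eq-E-scheme001}--\eqref{eq-E-scheme002}, with every quantity at level $n+1$. Add and subtract $\eta'_{j+1}\hat f_{j+\frac12}$ to write
\begin{align*}
\eta'_j\hat f_{j+\frac12}-\eta'_j\hat f_{j-\frac12}=\big(\eta'_{j+1}\hat f_{j+\frac12}-\eta'_j\hat f_{j-\frac12}\big)-\int_{u_j}^{u_{j+1}}\eta''(\xi)\hat f_{j+\frac12}\,{\rm d}\xi .
\end{align*}
The E-flux condition \eqref{eq:E flux001} at time $n+1$ together with $\eta''>0$ gives
\begin{align*}
\int_{u_j}^{u_{j+1}}\eta''(\xi)\big(\hat f_{j+\frac12}-f(\xi)\big)\,{\rm d}\xi\le 0 .
\end{align*}
Integrating by parts with $q'=\eta'f'$ then yields
\begin{align*}
\int_{u_j}^{u_{j+1}}\eta''\hat f_{j+\frac12}\,{\rm d}\xi\le\big[\eta'f-q\big]_{u_j}^{u_{j+1}} .
\end{align*}
Hence
\begin{align*}
\eta'_j\hat f_{j+\frac12}-\eta'_j\hat f_{j-\frac12}\ge \hat q_{j+\frac12}-\hat q_{j-\frac12},
\end{align*}
with $\hat q$ as defined above. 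Multiplying by $\sigma>0$ and combining with the first step gives \eqref{fully-discrete-E-scheme02b}.

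The argument is mostly bookkeeping; the points to watch are the following.
\begin{itemize}
\item The sign of $(u_{j+1}-u_j)$ in the integral bound must be handled correctly. This is the same as in Theorem \ref{theorem001}, since \eqref{eq:E flux001} carries the $\operatorname{sgn}$ factor that makes the oriented integral nonpositive in both cases.
\item The convexity inequality must be applied at $u^{n+1}_j$ and not at $u^n_j$, because $\eta'$ is evaluated at the same level as the fluxes.
\item The existence of a solution to the implicit system is taken for granted, as in the statement.
\end{itemize}
The paper's notation via $Q^{\text{\tiny E}}$ is not needed; one could alternatively express $\hat q$ in the viscous form used for the explicit lemma, but the flux form above is simpler.
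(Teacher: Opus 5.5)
Your proof is correct and follows the paper's argument. You multiply by $\eta'(u_j^{n+1})$ and bound the time difference by convexity; the paper writes this step as $\int_{u_j^n}^{u_j^{n+1}}[\eta'(u_j^{n+1})-\eta'(s)]\,{\rm d}s\ge 0$. You then reuse the divergence-form manipulation of Theorem \ref{theorem001} at level $n+1$, and your explicit $\hat q^{\text{\tiny E},n+1}_{j+\frac12}$ with its consistency check spells out what the paper leaves to that reference.
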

 
\begin{remark} 
The fully discrete E scheme with other non-negatively dissipative time discretization satisfies the entropy inequality for any convex entropy pair unconditionally.
 \end{remark} 
  
\begin{proof}  
Multiplying \eqref{fully-discrete-E-scheme01b} by { $\eta'(u_j^{n+1})$}
gets
{\small\begin{align*}
0=&\eta'(u_j^{n+1}) (u^{n+1}_j-u^{n}_j)
+\sigma \eta'(u_j^{n+1})  \left[\hat{f}^{\text{\tiny E},n+1}_{j+\frac12}
-\hat{f}^{\text{\tiny E},n+1}_{j-\frac12}\right]
\\ 
=&{ \eta(u_j^{n+1})  -\eta(u_j^{n}) }
+\big[ \eta'(u_j^{n+1}) (u^{n+1}_j-u^{n}_j) {  -\eta(u_j^{n+1})  +\eta(u_j^{n}) }\big]
+\sigma \eta'(u_j^{n+1})  \left[\hat{f}^{\text{\tiny E},n+1}_{j+\frac12}
-\hat{f}^{\text{\tiny E},n+1}_{j-\frac12}\right].
\end{align*}}%
At the above RHS,  the second term  
is non-negative, that is, 
\begin{align*}
\eta'(u_j^{n+1}) (u^{n+1}_j-u^{n}_j) {  -\eta(u_j^{n+1})  +\eta(u_j^{n}) }=  \int_{u^n_j}^{u^{n+1}_j}  \left[\eta'(u_j^{n+1})  -\eta'(s)\right] {\,\rm d}s\geq 0, \ \
 \end{align*} 
while the third term  may be transformed into the divergence form, see the proof of Theorem \ref{theorem001}, thus
 {\small\begin{align*}
 { \eta(u_j^{n+1})  -\eta(u_j^{n}) }
 +\sigma   \left[\hat{q}^{\text{\tiny E},n+1}_{j+\frac12}
-\hat{q}^{\text{\tiny E},n+1}_{j-\frac12}\right]\leq 0.
\end{align*}}%
\end{proof}

 \section{Sweby flux-limiter scheme}\label{section04}
 
Assume $a>0$,  rewrite the Lax-Wendroff (LW) scheme of $u_t+a u_x=0$ as
\begin{equation}\label{AA0}
u^{n+1}_j=u^n_j-\nu(u^n_j-u^n_{j-1}){ -\frac12\nu(1-\nu)(u^n_{j+1}-2u^n_j+u^n_{j-1})},
\end{equation}
The idea of the Sweby flux-limiter scheme is 
to limit { the anti-diffusive term} or { the anti-diffusive  flux}, 
that is,
to modify \eqref{AA0} as the following flux-limiter scheme \cite{Sweby-SINUM1984}
\begin{equation}\label{f4}
u^{n+1}_j=u^n_j-\nu \Delta_x^- u_j^n-\Delta_x^- ({ \varphi_j^n} \frac12\nu (1-\nu)
\Delta^+_x u_j^n),
\end{equation}
where $\varphi_j$ is some limiter, 
its value is non-negative, so that  the coefficient sign of the anti-diffusive term in \eqref{AA0} may be preserved.
The numerical flux of the flux-limier scheme \eqref{f4} is $$\hat{f}^{\text{\tiny FL}}_{j+\frac12}= au_j^n+{ \varphi_j^n} \frac12a (1-\nu)
(u_{j+1}^n- u_j^n).
$$

When Sweby extended the flux-limiter scheme (\ref{f4}) 
to the quasilinear conservation laws, he used the { E flux-based splitting}
  \begin{align*} 
\nu^n_{j+\frac12}= \sigma a^n_{j+\frac12}
= \sigma \frac{f^n_{j+1}-f^n_j} {u^n_{j+1}-u^n_j}
 =  \sigma \frac{   f_{j+1}^n-\hat{f}_{j+\frac12}^{\text{\tiny E},n} }  {\Delta u^n_{j+\frac12}}
 +  \sigma \frac{ \hat{f}_{j+\frac12}^{\text{\tiny E},n}-f^n_j}  {\Delta u^n_{j+\frac12}}
 \\
 =  \sigma \frac{(\Delta f_{j+\frac12})^{n,+}}  {\Delta u^n_{j+\frac12}} + \sigma \frac{(\Delta f_{j+\frac12})^{n,-}}  {\Delta u^n_{j+\frac12}}
 =:\nu^{n,+}_{j+\frac12}+\nu^{n,-}_{j+\frac12},
 \end{align*} 
 where $
 (\Delta f_{j+\frac12})^{n,+}:=f_{j+1}^n- \hat{f}^{\text{\tiny E},n}_{j+\frac12}$, 
$(\Delta f_{j+\frac12})^{n,-}:= \hat{f}_{j+\frac12}^{\text{\tiny E},n}-f_{j}^n$,
$\Delta u^n_{j+\frac12}=u^n_{j+1}-u^n_j$,
 $ \nu^{n,+}_{j+\frac12}\geq 0$, $\nu^{n,-}_{j+\frac12}\leq 0$.
 Define
 \begin{align*}
& \alpha^{n,\pm}_{j+\frac12} :=\frac12 \left(1\mp \nu^{n,\pm}_{j+\frac12}\right), \ \
 r^{n,+}_j:=\frac{\alpha^{n,+}_{j-\frac12}(\Delta f_{j-\frac12})^{n,+}}
{\alpha^{n,+}_{j+\frac12}(\Delta f_{j+\frac12})^{n,+}}, \ \
 r^{n,-}_j:=\frac{\alpha^{n,-}_{j+\frac12}(\Delta f_{j+\frac12})^{n,-}}
{\alpha^{n,-}_{j-\frac12}(\Delta f_{j-\frac12})^{n,-}}.
 \end{align*} 
Thus,  the LW scheme may be rewritten as ($u_{tt}=-\partial_x(f_{x})=\partial_x(f' f_x) $)
 \begin{align*}
 &u^{n+1}_j=u^n_j-\frac{\sigma}2  (  f_{j+1}^n -f_{j-1}^n )
 + \frac{\sigma^2 a^n_{j+\frac12} }2  ( f_{j+1}^n -f_{j}^n)
- \frac{ \sigma^2 a^n_{j-\frac12}  }2  ( f_{j}^n -f_{j-1}^n)\\
&
= u^n_j- {\sigma} ({ \hat{f}_{j+\frac12}^{\text{\tiny E},n} -\hat{f}_{j-\frac12}^{\text{\tiny E},n}})
+\frac{\sigma}2   \Delta_x^-  \left[ { \hat{f}_{j+\frac12}^{\text{\tiny E},n}}  -f_{j}^n
+{ \hat{f}_{j+\frac12}^{\text{\tiny E},n}  } -f_{j+1}^n   
\right]   
 \\ &+\frac{\sigma}2   \Delta_x^- \Big[\left(
 \nu^{n,+}_{j+\frac12}+\nu^{n,-}_{j+\frac12}
 \right) ( f_{j+1}^n { -\hat{f}_{j+\frac12}^{\text{\tiny E},n}+\hat{f}_{j+\frac12}^{\text{\tiny E},n} } -f_{j}^n)\Big]
 \\ &= 
 u^n_j- {\sigma} (\hat{f}_{j+\frac12}^{\text{\tiny E},n} -\hat{f}_{j-\frac12}^{\text{\tiny E},n})
+\frac{\sigma}2   \Delta_x^- \Big[
(\Delta f_{j+\frac12})^{n,-}-(\Delta f_{j+\frac12})^{n,+}
\\ &+ { 
( \nu^{n,+}_{j+\frac12}+\nu^{n,-}_{j+\frac12} ) [ (\Delta f_{j+\frac12})^{n,+}+(\Delta f_{j+\frac12})^{n,-}]   }
 \Big] .
 \end{align*} 
If ${   ( \nu^{n,+}_{j+\frac12}+\nu^{n,-}_{j+\frac12} ) [ (\Delta f_{j+\frac12})^{n,+}+(\Delta f_{j+\frac12})^{n,-}]   }
{ \approx}
  \nu^{n,+}_{j+\frac12}  (\Delta f_{j+\frac12})^{n,+}+
  \nu^{n,-}_{j+\frac12} (\Delta f_{j+\frac12})^{n,-}
  $ and use the limiter, then 
the { Sweby flux-limiter scheme} becomes
\begin{align}
u^{n+1}_j=&u^n_j-\sigma (\hat{f}_{j+\frac12}^{\text{\tiny E},n}-\hat{f}^{\text{\tiny E},n}_{j-\frac12})
-\sigma \Delta_x^- \Big\{{  \varphi (r^{n,+}_j)}
\alpha^{n,+}_{j+\frac12} (\Delta f_{j+\frac12})^{n,+}
-{ \varphi (r^{n,-}_{j+1})}\alpha^{n,-}_{j+\frac12}
(\Delta f_{j+\frac12})^{n,-} \Big\},
 \label{EQ-chap05-Sweby001}
\end{align}

\begin{lemma}
The Sweby flux-limiter scheme \eqref{EQ-chap05-Sweby001} is TVD under a 
 CFL type condition \cite{Sweby-SINUM1984}
$$
\sup_{\xi}\{\sigma |f'(\xi)|\} \leq \left(\frac2{2+\Phi}\right)\mu\leq \frac23.
$$
\end{lemma}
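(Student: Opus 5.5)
The plan is to put \eqref{EQ-chap05-Sweby001} into Harten's incremental form
$$u^{n+1}_j=u^n_j+C_{j+\frac12}\Delta u^n_{j+\frac12}-D_{j-\frac12}\Delta u^n_{j-\frac12}$$
and then invoke Harten's lemma. That lemma gives TVD provided $C_{j+\frac12}\ge0$, $D_{j+\frac12}\ge0$ and $C_{j+\frac12}+D_{j+\frac12}\le1$.

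The limiter is assumed to satisfy Sweby's conditions. These are $\varphi(r)=0$ for $r\le0$, and $0\le\varphi(r)\le\Phi$, $0\le\varphi(r)/r\le\mu$ for $r>0$, so that $\varphi(r)/r\le\mu$ for all $r\neq0$. The constants $\Phi,\mu$ in the CFL condition refer to these bounds.

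\textbf{Splitting of the update.} The first-order part is written using $(\Delta f_{j\pm\frac12})^{n,\pm}=\nu^{n,\pm}_{j\pm\frac12}\Delta u^n_{j\pm\frac12}/\sigma$:
$$-\sigma(\hat f^{\text{\tiny E}}_{j+\frac12}-\hat f^{\text{\tiny E}}_{j-\frac12})=-\nu^{n,-}_{j+\frac12}\Delta u^n_{j+\frac12}-\nu^{n,+}_{j-\frac12}\Delta u^n_{j-\frac12}.$$
The anti-diffusive part splits into a ``$+$'' group and a ``$-$'' group.

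\textbf{The ``$+$'' group.} It equals
$$-\Big[\varphi(r^+_j)-\frac{\varphi(r^+_{j-1})}{r^+_j}\Big]\alpha^+_{j-\frac12}\nu^+_{j-\frac12}\Delta u_{j-\frac12},$$
using the definition of $r^+_j$ to rewrite $\alpha^+_{j+\frac12}(\Delta f_{j+\frac12})^+=\alpha^+_{j-\frac12}(\Delta f_{j-\frac12})^+/r^+_j$.

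\textbf{The ``$-$'' group.} Symmetrically, using $r^-_j$, it becomes
$$+\Big[\varphi(r^-_{j+1})-\frac{\varphi(r^-_{j+2})}{r^-_{j+1}}\Big]\alpha^-_{j+\frac12}\nu^-_{j+\frac12}\Delta u_{j+\frac12}.$$
The index bookkeeping here must be checked carefully. Whichever ratio appears, the bracket's coefficient is bounded by $\Phi+\mu$ in absolute value and can be absorbed.

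\textbf{The coefficients.} This gives
$$C_{j+\frac12}=-\nu^{-}_{j+\frac12}\Big(1+\alpha^-_{j+\frac12}[\cdots]\Big),\qquad D_{j-\frac12}=\nu^{+}_{j-\frac12}\Big(1-\alpha^+_{j-\frac12}[\cdots]\Big).$$
Since $\nu^+\ge0$ and $\nu^-\le0$, the task reduces to showing
$$0\le 1-\alpha^+_{j-\frac12}\Big[\varphi(r^+_j)-\varphi(r^+_{j-1})/r^+_j\Big],$$
together with the analogue for the minus part, plus the bound $C+D\le1$.

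\textbf{Bounding the brackets.} The limiter properties give
$$-\mu\le \varphi(r^+_j)-\varphi(r^+_{j-1})/r^+_j\le\Phi.$$
Also $\alpha^\pm\in[0,\tfrac12]$ once $|\nu^\pm|\le1$, and $|\nu^\pm_{j+\frac12}|\le\sup\sigma|f'|$. This holds because $\hat f^{\text{\tiny E}}$ lies between the min and max of $f$ on the interval, by \eqref{eq:E flux001}, so $f_{j+1}-\hat f^{\text{\tiny E}}$ and $\hat f^{\text{\tiny E}}-f_j$ are each bounded by $\sup|f'|\,|\Delta u|$ (mean value theorem).

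Then $D_{j-\frac12}\le\nu^+(1+\mu/2)$ and $D_{j-\frac12}\ge \nu^+(1-\Phi/2)$, and similarly for $C$. Positivity requires $\alpha\Phi\le1$, which is guaranteed by the CFL bound. The hard part is $C_{j+\frac12}+D_{j+\frac12}\le1$. Here $C_{j+\frac12}$ and $D_{j+\frac12}$ both involve the same interface $j+\frac12$: one needs
$$(|\nu^-|+\nu^+)(1+\max\text{ bracket}\cdot\alpha)\le1.$$
Using $|\nu^-|+\nu^+\le 2\sup\sigma|f'|$ and the given CFL $\sup\sigma|f'|\le\frac{2\mu}{2+\Phi}$, combined with the bracket bounds and $\mu\le\ldots$, should close the estimate.

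\textbf{Main obstacle.} I expect this last step to be the main obstacle: making $\Phi$ and $\mu$ appear exactly as in the stated condition. I would first try the crude bounds above and then sharpen by writing $\alpha^\pm=\frac12(1\mp\nu^\pm)$ explicitly, which gives quadratic expressions in $\nu^\pm$ to be maximized under the CFL condition. If that does not close, I would follow Sweby's original argument in \cite{Sweby-SINUM1984}, where the bound $\frac23$ arises from $\Phi=\mu=2$-type limiters.
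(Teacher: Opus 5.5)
The paper gives no proof of this lemma; it only cites Sweby. Your route through Harten's incremental form is the standard one, but the execution has two genuine gaps.

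\textbf{The rewriting of the limited terms is wrong.} Since $\alpha^+_{j+\frac12}(\Delta f_{j+\frac12})^+=\alpha^+_{j-\frac12}(\Delta f_{j-\frac12})^+/r^+_j$, the quotient lands on the term carrying $\varphi(r^+_j)$. Likewise $\alpha^-_{j-\frac12}(\Delta f_{j-\frac12})^-=\alpha^-_{j+\frac12}(\Delta f_{j+\frac12})^-/r^-_j$. The correct coefficients are
\[
D_{j-\frac12}=\nu^+_{j-\frac12}\Big(1+\alpha^+_{j-\frac12}\Big[\frac{\varphi(r^+_j)}{r^+_j}-\varphi(r^+_{j-1})\Big]\Big),\qquad
C_{j+\frac12}=|\nu^-_{j+\frac12}|\Big(1-\alpha^-_{j+\frac12}\Big[\varphi(r^-_{j+1})-\frac{\varphi(r^-_{j})}{r^-_{j}}\Big]\Big).
\]
This matters. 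A cross-indexed quotient such as $\varphi(r^+_{j-1})/r^+_j$ or $\varphi(r^-_{j+2})/r^-_{j+1}$ is not controlled by any limiter bound; it is unbounded as $r^+_j\to0^+$. So your claim that the bracket is bounded ``whichever ratio appears'' is false. The $\alpha^\pm$-weighted ratios $r^\pm$ are built precisely so that each $\varphi(r)$ is divided by its own $r$.

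With the correct brackets the ranges are $[-\Phi,\mu]$ for the plus group and $[-\mu,\Phi]$ for the minus group. You also have sign slips: your own group formulas give $1+\alpha^+[\cdot]$ in $D$ and $1-\alpha^-[\cdot]$ in $C$. These slips cancel your reversed bracket bounds, so your final range for $D$ is right, but not by a valid derivation. Finally, $C,D\ge0$ follows from $0\le\alpha^\pm\le\frac12$ and $\Phi\le2$, not from the CFL bound.

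\textbf{The CFL step rests on a false claim.} The E condition does not place $\hat f^{\text{\tiny E}}$ between $\min f$ and $\max f$. For $u_{j+1}>u_j$ it says $\hat f^{\text{\tiny E}}_{j+\frac12}\le\min f$, possibly far below. The controlling quantity is $\nu^+_{j+\frac12}+|\nu^-_{j+\frac12}|=\sigma\,(f_j+f_{j+1}-2\hat f^{\text{\tiny E}}_{j+\frac12})/(u_{j+1}-u_j)=Q^{\text{\tiny E}}_{j+\frac12}$, the viscosity coefficient of Section 3. For the gLF flux this equals $\sigma q$ with $q\ge\sup|f'|$ arbitrary. So a bound on $\sup\sigma|f'|$ alone does not even make the first-order part TVD. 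You must either assume an E flux with $Q^{\text{\tiny E}}_{j+\frac12}\le\sigma\sup|f'|$ (Godunov, EO, or gLF with $q=\sup|f'|$), or state the CFL condition in terms of $Q^{\text{\tiny E}}$.

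With that assumption the closing step is one line, and your factor $2$ is unnecessary. Since $\alpha^\pm\le\frac12$, you get $C_{j+\frac12}+D_{j+\frac12}\le Q^{\text{\tiny E}}_{j+\frac12}(1+\frac{\mu}{2})$. The argument therefore needs $Q^{\text{\tiny E}}_{j+\frac12}\le\frac{2}{2+\mu}$, where $\mu$ bounds $\varphi(r)/r$.

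\textbf{Under your reading of $\Phi$ and $\mu$, the stated condition cannot be sufficient.} The limiter $\varphi(r)=\max(0,\min(2r,1))$ has $\Phi=1$, $\mu=2$, so the stated condition would allow $\sup\sigma|f'|=\frac43$. Take $f=au$ and a single-spike datum: the limited terms vanish, the scheme is upwind at Courant number $\frac43$, and the total variation grows from $2$ to $\frac{10}{3}$. This reading also contradicts the statement's own ``$\le\frac23$''. So the obstacle you flag is not a matter of sharper estimates. The constants must be pinned down first, for example with $\varphi$ and $\varphi/r$ both bounded by $\Phi\in[1,2]$ and $\mu\le1$, so the hypothesis gives $\sigma\sup|f'|\le\frac{2}{2+\Phi}$. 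After that, the estimate above closes.
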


With the help of the E flux based splitting,  the LW scheme may also be rewritten  in incremental form
 \begin{align*}
 u^{n+1}_j=&u^n_j-\frac{\sigma}2  [ {  f_{j+1}^n -f_{j-1}^n} ]
 + \frac{(\sigma a^n_{j+\frac12})^2 }2  ( u_{j+1}^n -u_{j}^n)
- \frac{(\sigma a^n_{j-\frac12})^2 }2  ( u_{j}^n -u_{j-1}^n)\\
=& u^n_j-\frac{\sigma}2(f_{j+1}^n
{ -\hat{f}_{j+\frac12}^{\text{\tiny E}}+\hat{f}_{j+\frac12}^{\text{\tiny E}}}
 { -f_j^n+f_j^n}
 { -\hat{f}_{j-\frac12}^{\text{\tiny E}}+\hat{f}_{j-\frac12}^{\text{\tiny E}}}
 - f_{j-1}^n)
 \\ &+\frac{1}2  \left(
 \nu^{n,+}_{j+\frac12}+\nu^{n,-}_{j+\frac12}
 \right)^2( u_{j+1}^n -u_{j}^n)
  -\frac{1}2  \left(
  \nu^{n,+}_{j-\frac12}+\nu^{n,-}_{j-\frac12}
  \right)^2( u_{j}^n -u_{j-1}^n)
 \\ =&
 u^n_j
 -\frac12\left [ { 
 (\nu^{n,+}_{j+\frac12}+\nu^{n,-}_{j+\frac12}) \Delta u^n_{j+\frac12} 
 -(\nu^{n,+}_{j-\frac12}+\nu^{n,-}_{j-\frac12}) \Delta u^n_{j-\frac12} } \right]
\\ & +\frac{1}2  \left(
 \nu^{n,+}_{j+\frac12}+\nu^{n,-}_{j+\frac12}
 \right)^2  \Delta u^n_{j+\frac12} 
  -\frac{1}2  \left(
  \nu^{n,+}_{j-\frac12}+\nu^{n,-}_{j-\frac12}
  \right)^2 \Delta u^n_{j-\frac12} 
  .
 \end{align*} 

Let us  discuss the accuracy of the scheme \eqref{EQ-chap05-Sweby001}
in the sense of LTE. 
 Let $\varphi_j\equiv 1$, then \eqref{EQ-chap05-Sweby001} may 
 be rewritten as 
 \begin{align}\nonumber
 u_j^{n+1}=&u_j-\frac\sigma 2(f_{j+1}-f_{j-1})\\ \nonumber
 &+\frac{\sigma^2}{2}\Big [{ 
 \frac{ f_{j+1}-\hat{f} ^{\text{\tiny E},n}_{j+\frac12}}{u_{j+1}-u_j}
 ( f_{j+1}-\hat{f}^{\text{\tiny E},n} _{j+\frac12})
 +\frac{\hat{f} ^{\text{\tiny E},n}_{j+\frac12}-f_j}{u_{j+1}-u_j}
 ( \hat{f} _{j+\frac12}-f_j) }\Big]
 \\  \label{EQ-chap05-Sweby002}
 &-
 \frac{\sigma^2}{2}\Big [
 \frac{ f_{j}-\hat{f}^{\text{\tiny E},n} _{j-\frac12}}{u_{j}-u_{j-1}}
 ( f_{j}-\hat{f} ^{\text{\tiny E},n}_{j-\frac12})
 +\frac{\hat{f} ^{\text{\tiny E},n}_{j-\frac12}-f_{j-1}}{u_{j}-u_{j-1}}( \hat{f} _{j-\frac12}-f_{j-1})
 \Big].
 \end{align}
Because $(\Delta f_{j+\frac12})^{+}+(\Delta f_{j+\frac12})^{-}=f_{j+1}-f_j=a_{j+\frac12}(u_{j+1}-u_j)$,
\begin{align*}
  a_{j+\frac12}^2 (u_{j+1}-u_j)
=&
{\frac{ f_{j+1}-\hat{f} ^{\text{\tiny E},n}_{j+\frac12}}{u_{j+1}-u_j}
 ( f_{j+1}-\hat{f} ^{\text{\tiny E},n}_{j+\frac12})
 +\frac{\hat{f} ^{\text{\tiny E},n}_{j+\frac12}-f_j}{u_{j+1}-u_j}
 ( \hat{f} ^{\text{\tiny E},n}_{j+\frac12}-f_j)      }
\\ & {+2 \frac{(\Delta f_{j+\frac12})^{+}(\Delta f_{j+\frac12})^{-}}{u_{j+1}-u_j}}.
  \end{align*}
Comparing it with the viscous term in \eqref{EQ-chap05-Sweby002}, 
   the viscous term in \eqref{EQ-chap05-Sweby002} is missing the last term on the right-hand side of the above equation.
%
 It can be inferred that 
 { \eqref{EQ-chap05-Sweby002} may not possess second-order accuracy in both space and time, and  so  is the Sweby flux-limiter scheme \eqref{EQ-chap05-Sweby001}}.


  Does there exist $\hat{f}_{j+\frac12}$ such that
 $$ \left(
 \frac{f_{j+1}-f_j} {u_{j+1}-u_j}   \right)^2
 = \left(   \frac{   f_{j+1}-\hat{f}_{j+\frac12} }  {u_{j+1}-u_j}  \right)^2
 +\left(   \frac{ \hat{f}_{j+\frac12}-f_j}  {u_{j+1}-u_j}   \right)^2?
 $$
If $f'(u)$ is always nonnegative and takes $\hat{f}_{j+\frac12}=f_j$,
or $f'(u)$is always nonpositive and takes$\hat{f}_{j+\frac12}=f_{j+1}$, 
 then the above identity holds,
so the scheme \eqref{EQ-chap05-Sweby001}
has second-order accuracy in the sense of local truncation error and is TVD.
 %
  If $\hat{f}_{j+\frac12}=\frac12(f_j+f_{j+1})-\frac12|a_{j+\frac12}|(u_{j+1}-u_j)$, { generally, it does not satisfy the E flux condition \eqref{eq:E flux001}},
  where $a_{j+\frac12}(u_{j+1}-u_j)=f_{j+1}-f_j$,  
  then the above identity holds.

 If the numerical flux is that of the three-point upwind scheme, that is,
 $$\hat{f}_{j+\frac12} 
 =\frac12(f_j+f_{j+1})-\frac{|a_{j+\frac12} |} 2(u_{j+1}-u_j),\
 a_{j+\frac12}(u_{j+1}-u_j)=f_{j+1}-f_{j},
 $$
 then
  \begin{align*}
 (\Delta f_{j+\frac12})^{+} &=f_{j+1}-\hat{f}_{j+\frac12} 
  =\frac12(f_{j+1}-f_j)+\frac{|a_{j+\frac12} |} 2(u_{j+1}-u_j)
  \\ &
  =\frac{a_{j+\frac12}+|a_{j+\frac12} |} 2(u_{j+1}-u_j),
  \\
  (\Delta f_{j+\frac12})^{-}
&= \hat{f}_{j+\frac12} -f_j
=\frac12(f_{j+1}-f_j)-\frac{|a_{j+\frac12} |} 2(u_{j+1}-u_j)
\\ & =\frac{a_{j+\frac12}-|a_{j+\frac12} |} 2(u_{j+1}-u_j).
    \end{align*}
Hence, 
 \begin{align*}
 \nu_{j+\frac12}^+=&\sigma\frac{(\Delta f_{j+\frac12})^{+}}{\Delta u_{j+\frac12}^n}
 =\sigma\frac{a_{j+\frac12}+|a_{j+\frac12} |} 2\geq 0,
 \\
  \nu_{j+\frac12}^-=&\sigma\frac{(\Delta f_{j+\frac12})^{-}}{\Delta u_{j+\frac12}^n}
 =\sigma\frac{a_{j+\frac12}-|a_{j+\frac12} |} 2\leq 0,
 \\
{(\Delta f_{j+\frac12})^{+}(\Delta f_{j+\frac12})^{-}}
=&\frac14  (a_{j+\frac12}+|a_{j+\frac12} |)  
 (a_{j+\frac12}-|a_{j+\frac12} |) (u_{j+1}-u_j)^2 
 \\
 & 
 =\frac14(a_{j+\frac12}^2-|a_{j+\frac12} |^2)   
 (u_{j+1}-u_j) ^2={0}.
  \end{align*}
It means that 
the scheme \eqref{EQ-chap05-Sweby001}
is second-order accurate in the sense of LTE and TVD.
 
 If the numerical flux is that of the EO scheme (satisfying the E flux condition \eqref{eq:E flux001}), 
that is,
 $$\hat{f}_{j+\frac12} 
 =\frac12(f_j+f_{j+1})-\frac12\int_{u_j}^{u_{j+1}} |a(\xi)|~{\rm d}\xi,
 $$
 then
 \begin{align*}
& { (\Delta f_{j+\frac12})^{+}(\Delta f_{j+\frac12})^{-}}
=\frac14 \int_{u_j}^{u_{j+1}}  (a(\xi)+ |a(\xi)|)~{\rm d}\xi
 \int_{u_j}^{u_{j+1}}  (a(\xi)- |a(\xi)|)~{\rm d}\xi,
  \end{align*}
 is { not necessarily zero}. 
For example, if $u_j=0$, $u_{j+1}=2$,
 $$a(u)=\begin{cases} 1, & u\in [0,1],\\
 -1, & u\in (1,2],\end{cases}
 $$
then
  \begin{align*}
&  \int_{u_j}^{u_{j+1}}  (a(\xi)- |a(\xi)|)~{\rm d}\xi
=\int_0^1(1-1)~{\rm d}\xi+\int_1^2(-1-1)~{\rm d}\xi=-2,\\
\\
& \int_{u_j}^{u_{j+1}}  (a(\xi)+ |a(\xi)|)~{\rm d}\xi
=\int_0^1(1+1)~{\rm d}\xi+\int_1^2(-1+1)~{\rm d}\xi=2,
\end{align*}
so that
 \begin{align*}
& { (\Delta f_{j+\frac12})^{+}(\Delta f_{j+\frac12})^{-}}
=\frac14 \times(-2)\times 2=-1\neq 0.
  \end{align*}

 Is it possible to add corresponding terms to the anti-diffusion term in \eqref{EQ-chap05-Sweby001}, so that the scheme can ensure both second-order accuracy in space and time and TVD property?
  Unfortunately,  there is no definite answer at the moment.

\section{Conclusion}\label{section05}

This paper has revisited Osher's E scheme    \cite{Osher-SINUM1984}  and  Sweby's flux-limiter scheme for quasilinear hyperbolic conservation laws  \cite{Sweby-SINUM1984}.
For 1D scalar conservation laws, some existing and newly presented results are listed as follows:  
  \begin{itemize}
  \item 
   { Two-point monotone flux is E flux, but conversely it may not necessarily
be correct,}
%
    including  {  the} generalized LF flux and {  the} EO flux { etc.}
  If $u_{j+1}\geq u_j$ { (resp. $u_{j+1}< u_j$)}, { the} Godunov flux is the upper { (resp. lower)} boundary of  the E flux. 
  \item Multi-point { (three or more points) monotone} flux may not necessarily be { E} flux, and multi-point { E} flux may not necessarily be { monotone} flux.
  \item Semi-discrete E scheme satisfies the entropy inequality for any
  convex entropy pair, but numerical entropy flux is not unique.
  
  \item For the fully discrete (three-point) E scheme with explicit Euler,
its viscous coefficient  is not less than 0 and 
 $ |a_{j+\frac12}|$,
  so that it is at most first-order accurate  in the sense of LTE.
 Moreover,   
under the CFL type condition $\sigma \hat{Q}^{\text{\tiny E}}_{j+\frac12}\leq 1$, 
it is TVD, and  satisfies  the entropy inequality for any
  convex entropy pair when  
  $\hat{Q}^{\text{\tiny E}}_{j+\frac12}\geq \max_{u\in [u_j,u_{j+1}} \{|f'(u)|\}$  { which is a bit too severe  and generally does not hold}.
It is known that the conservative three-point monotone scheme (E scheme) satisfies the entropy inequality for any convex entropy pair without such severe condition.
 
\item The fully discrete E scheme with 
or non-negatively dissipative time discretization (e.g. { the} implicit Euler) satisfies  the entropy inequality for any
  convex entropy pair unconditionally.

\item Sweby's  flux-limiter scheme for the quasilinear conservation laws was built on 
the E flux-based splitting  $f_{j+1}-f_j=f_{j+1} { -\hat{f}^{\text{\tiny E}}_{j+\frac12}+\hat{f}^{\text{\tiny E}}_{j+\frac12}}-f_j$ and the LW scheme. It may not be second-order accurate in both space and time.

\end{itemize}
 
	\section*{Acknowledgments}
The work was partially supported by  the National Natural Science Foundation of China (No.12288101). 
The author would also like to thank Professor Junming Duan of The Chinese University of Hong Kong, Shenzhen and Mr. Tengfei Zheng 
for numerous discussions
during the preparation of this work.


\end{document}